\numberwithin{equation}{section}
\theoremstyle{plain}
\newtheorem{thm}{Theorem}[section]
\newtheorem{cor}[thm]{Corollary}
\newtheorem{prop}[thm]{Proposition}
\newtheorem{lem}[thm]{Lemma}
\theoremstyle{definition}
\newtheorem{rem}[thm]{Remark}
\newtheorem{defn}[thm]{Definition}
\newtheorem{ques}{Question}
\renewcommand{\P}{{\mathcal{P}}}
\renewcommand{\phi}{\varphi}
\newcommand{\upchi}{{\raise.35ex\hbox{$\chi$}}}
\begin{document}

%%%%%%%%%%%%%%%%%%%%%%%%%%%%%%%%%%%%%%%%%%%
\title{Power Set of Some Quasinilpotent Weighted shifts on $\ell^p$}

\author[C. L. Hu]{ChaoLong Hu}
\address{ChaoLong HU: School of Mathematics\\Jilin University\\Changchun 130012\\P.R. CHINA}\email{huzl21@mails.jlu.edu.cn}

\author[Y. Q. Ji ]{YouQing Ji }
\address{YouQing JI: School of Mathematics\\Jilin University\\Changchun 130012\\P.R. CHINA}\email{jiyq@jlu.edu.cn}

\thanks{Supported by NNSF of China (Grant No.12271202 and No.12031002).}
\begin{abstract}
  For a quasinilpotent operator $T$ on a Banach space $X$, Douglas and Yang defined $k_x=\limsup\limits_{z\rightarrow 0}\frac{\ln\|(z-T)^{-1}x\|}{\ln\|(z-T)^{-1}\|}$ for each nonzero vector $x\in X$, and call $\Lambda(T)=\{k_x: x\ne 0\}$ the {\em power set} of $T$.
  They proved that the power set have a close link with $T$'s lattice of hyperinvariant subspaces.
  In this paper, we computes the power set of some weighted shifts on $\ell^p$ for $1\leq p< \infty$.
   The following results are obtained:
  (1) If $T$ is an injective quasinilpotent forward unilateral weighted shift on $\ell^p(\mathbb{N})$, then
      $\Lambda(T)=\{1\}$ when $k_{e_0}=1$, where $\{e_n\}_{n=0}^{\infty}$ be the canonical basis for $\ell^p(\mathbb{N})$;
  (2) There is a class of backward unilateral weighted shifts on $\ell^p(\mathbb{N})$ whose power set is
      $[0,1]$;
  (3) There exists a bilateral weighted shift on $\ell^p(\mathbb{Z})$ with power set $[\frac{1}{2},1]$.
\end{abstract}
\subjclass[2010]{Primary 47A10; Secondary 47B37; 15A60}
\keywords{Quasinilpotent operator, Power set, Weighted shift, Hyperinvariant subspace}
\maketitle

%%% ----------------------------------------------------------------------

\section{Introduction}

Let $X$ be a complex Banach space, and $\mathcal{B}(X)$ the algebra of bounded linear operators on $X$. For $T\in \mathcal{B}(X)$,
let $\sigma(T)$ be the spectrum of $T$.
Say $T$ is {\em quasinilpotent} if $\sigma(T)=\{0\}$.
The hyperinvariant subspace problem asks:
Does every bounded operator $T$ on an infinite dimensional Hilbert space have a non-trivial closed hyperinvariant (i.e., invariant for all the operators commuting with $T$) subspace?
This question is still open, especially for quasinilpotent operators.
We refer the readers to \cite{FJKP05,FP05, RR73, Read97, Sar74} and the references therein for more information along this line.

The {\em power set $\Lambda(T)$} of a quasinilpotent operator $T$ was introduced by  Douglas and Yang (see \cite{DY2, DY3}).
For reader's convenience, we recall it below.
\begin{defn}
Suppose that $T\in\mathcal{B}(X)$ is quasinilpotent and  $x\in X\setminus\{0\}$. Let
$$k_{(T,x)}=\limsup\limits_{z\rightarrow 0}\frac{\ln\|(z-T)^{-1}x\|}{\ln\|(z-T)^{-1}\|}.$$
Usually, we briefly write $k_x$ instead of $k_{(T,x)}$ if there is no confusion possible.
Set $\Lambda(T)=\{k_x : x\ne 0\}$, and call it the {\em power set} of $T$.
\end{defn}
\begin{rem}
For $\Lambda(T)$, there are two basic and often-used  facts.
\end{rem}
\begin{itemize}
  \item [(1)] From 
  $$\frac{\|x\|}{|z|+\|T\|}\leq\|(z-T)^{-1}x\|\leq\|(z-T)^{-1}\|\|x\|,~~ \forall z\ne 0,$$
  it follows that $\Lambda(T)\subset [0,1]$.
  \item [(2)]
  Since $\lim\limits_{z\rightarrow 0}\|(z-T)^{-1}\|=+\infty$ when $\sigma(T)=\{0\}$,
  it holds that $k_{(\alpha x)}=k_x$ for all $\alpha\ne 0$ and $x\ne 0$.
\end{itemize}
\begin{prop}[see \cite{DY3}]
Suppose that $T\in{\mathcal{B}(\mathcal{H})}$, if $T^n=0$ and $T^{n-1}\neq0$, then $\Lambda(T)=\{\frac{j}{n}:j=1,2,\cdots,n\}$.
\end{prop}

Following from that proposition above, when $T$ is Jordan block of order $n$, it is obvious that $\operatorname{Lat}T$ is order isomorphic $\{0\}\bigcup \Lambda(T)$.
Therefore, what instinct information about $\operatorname{Lat}T$ can be seen from $\Lambda(T)$?
Douglas and Yang established a link between the power set and the hyperinvariant subspace problem as follows.
\begin{prop}[see {\cite[Proposition 7.1, Corollary 7.2]{DY3}}]\label{pthy}
Let $T\in \mathcal{B}(X)$ be a quasinilpotent operator.
For $\tau\in[0,1]$, write $M_\tau=\{x : k_{x}\leq\tau\}\bigcup\{0\}$.
Then, $M_\tau$ is a linear subspace of $X$, and $A(M_\tau)\subset M_\tau$ for every $A$ commuting with $T$.

In particular, when $\Lambda(T)$ contains two different points $\tau$ with closed $M_\tau$,
$T$ has a nontrivial closed hyperinvariant subspace.
\end{prop}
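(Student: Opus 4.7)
The plan is to verify the three assertions in order: first that $M_\tau$ is a linear subspace, then that $A(M_\tau)\subset M_\tau$ for every $A\in\mathcal{A}'(T)$, and finally to extract the hyperinvariant subspace from the hypothesis that two distinct values of $\tau$ yield closed $M_\tau$.

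For the subspace claim, closure under nonzero scalars has already been recorded in the preceding discussion via $k_{\alpha x}=k_x$, so what needs checking is closure under addition. I would start from the triangle inequality
\[
\|(z-T)^{-1}(x+y)\|\le 2\max\bigl(\|(z-T)^{-1}x\|,\,\|(z-T)^{-1}y\|\bigr),
\]
take logarithms, and divide through by $\ln\|(z-T)^{-1}\|$, which is strictly positive in a punctured neighborhood of $0$ because quasinilpotence forces $\|(z-T)^{-1}\|\to\infty$ as $z\to 0$. The additive constant $\ln 2$ is then swamped by the blow-up in the denominator, and using the elementary identity $\limsup\max(f,g)=\max(\limsup f,\limsup g)$ one obtains $k_{x+y}\le\max(k_x,k_y)\le\tau$, so $x+y\in M_\tau$.

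For the commutant invariance, the key point is that any $A\in\mathcal{A}'(T)$ commutes with $z-T$ and hence with $(z-T)^{-1}$ wherever the latter exists. I would use this to write
\[
\|(z-T)^{-1}Ax\|=\|A(z-T)^{-1}x\|\le\|A\|\,\|(z-T)^{-1}x\|,
\]
then again pass to logs divided by $\ln\|(z-T)^{-1}\|$; the $\ln\|A\|$ contribution vanishes in the limit since the denominator diverges, leaving $k_{Ax}\le k_x$, which gives $A M_\tau\subset M_\tau$.

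For the concluding statement, suppose $\tau_1<\tau_2$ both lie in $\Lambda(T)$ and each $M_{\tau_i}$ is closed. I would single out $M_{\tau_1}$: by the two preceding steps it is a closed linear subspace left invariant by the entire commutant of $T$. Membership $\tau_1\in\Lambda(T)$ provides some $x\ne 0$ with $k_x=\tau_1$, making $M_{\tau_1}$ nonzero, while any $y\ne 0$ realizing $k_y=\tau_2>\tau_1$ witnesses $y\notin M_{\tau_1}$, making $M_{\tau_1}$ proper. Hence $M_{\tau_1}$ is the desired nontrivial closed hyperinvariant subspace. No step presents a genuine obstacle; the main cautions are ensuring the positivity of $\ln\|(z-T)^{-1}\|$ near $0$ and correctly manipulating the $\limsup$ of a maximum, both of which are routine.
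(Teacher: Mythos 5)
Your proof is correct and complete: closure of $M_\tau$ under addition via $k_{x+y}\le\max(k_x,k_y)$, commutant invariance via $k_{Ax}\le k_x$ (both using that $\ln\|(z-T)^{-1}\|\to+\infty$ absorbs the additive constants), and taking the smaller of the two closed subspaces $M_{\tau_1}\subsetneq X$ as the nontrivial hyperinvariant subspace. Note that the paper itself gives no proof of this proposition, stating it as a result imported from \cite{DY3}; your argument is precisely the standard one given there, so there is nothing to compare beyond observing that only the closedness of $M_{\tau_1}$ for the smaller value $\tau_1$ is actually needed in the final step.
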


In fact, $M_\tau$ is not always closed.
In \cite{JiLiu},
Ji and Liu constructed a quasinilpotent operator $T\in{\mathcal{B}(\mathcal{H})}$ with power set $[0,1]$,
for which $M_\tau$ is not closed for each $\tau\in [0,1)$.
But they proved that there exists a subset $\mathcal{N}$ of $\operatorname{Lat}T$,
which is order isomorphic to $\Lambda(T)$.
Therefore, the following question which appears in \cite{JiLiu}, comes as most natural.
\begin{ques}\label{Q:oriso}
For any quasinilpotent operator $T\in\mathcal{B}(X)$, can one find a subset $\mathcal{N}$ of $\operatorname{Lat}T$ which is order isomorphic to $\Lambda(T)$?
\end{ques}
Furthermore, we can consider more questions about the power set.
 In \cite{HeZhu, JiLiu}, we observe that the subsets $\{1\}$ and $[0,1]$ can be the power set of some quasinilpotent operator.
Can we find other subsets of $[0,1]$ that correspond to power set of some quasinilpotent operator?
Thus, we may consider the following question.
\begin{ques}\label{Q:subset}
Which subsets of $[0,1]$ could be the power set of a quasinilpotent operator $T\in \mathcal{B}(X)$?
\end{ques}

Given a Banach space $X$, we let $X^\prime$ denote its dual space. Suppose that $T\in\mathcal{B}(X)$, we denote by $T^\prime$ the adjoint of acting on $X^\prime$. Comparing with the fact that $\sigma(T)=\sigma(T^\prime)$, one may naturally consider the following question.

\begin{ques}\label{Q:adjoint}
Are $\Lambda(T)$ and $\Lambda(T^\prime)$ intrinsically linked?
\end{ques}
In light of these questions, this paper is devoted to the calculations of the power set of quasinilpotent weighted shifts on $\ell^p$.
To proceed, we recall some terminology and notation.

For $1\leq p<\infty$, let $\ell^p(\mathbb{Z})$ be the Banach space of all two-sided $p$-summable sequences of complex numbers, and $\ell^p(\mathbb{N})$ be the Banach space of all sequences
$\{x_n\}_{n\in \mathbb{Z}}\in \ell^p(\mathbb{Z})$ for which $x_n=0$ for all $n\leq0$.
Let $\{e_k\}_{k\in\mathbb{K}}$ be the canonical basis for $l^p(\mathbb{K})$, where the set $\mathbb{K}$ will be the set $\mathbb{N}$ or $\mathbb{Z}$.

Say operator $T$ on $\ell^p(\mathbb{N})$ is a {\em forward unilateral weighted shift} with weight sequence $\{w_n\}_{n=0}^\infty$ if $T\in \mathcal{B}(\ell^p(\mathbb{N}))$ and if
$$Te_n=w_ne_{n+1},\quad \forall n\in \mathbb{N},$$
i.e., $T$ has the following matrix representation
\[
\begin{bmatrix}
  0  \\
  w_0&0\\
  &w_1&0\\
  &&w_2&0\\
  &&&\ddots&\ddots
\end{bmatrix}
\begin{matrix}
e_0\\
e_1\\
e_2\\
e_3\\
\vdots
\end{matrix}
.\]

Say operator $T$ on $\ell^p(\mathbb{N})$ is a {\em backward unilateral weighted shift} with weight sequence $\{w_n\}_{n=0}^\infty$ if $T\in \mathcal{B}(\ell^p(\mathbb{N}))$ and if
$$Te_0=0,  \quad Te_n=w_{n-1}e_{n-1}, \quad \forall n\in \mathbb{N},$$
i.e., $T$ has the following matrix representation
\[
\begin{bmatrix}
  0&w_0 \\
  &0&w_1\\
  &&0&w_2\\
  &&&\ddots&\ddots
\end{bmatrix}
\begin{matrix}
e_0\\
e_{1}\\
e_{2}\\
\vdots
\end{matrix}
.\]

Say operator $T$ on $\ell^p(\mathbb{N})$ is a {\em bilateral weighted shift} with weight sequence $\{w_n\}_{n=-\infty}^{+\infty}$ if $T\in\mathcal{B}(\ell^p(\mathbb{Z}))$ and if
$$Te_n=w_ne_{n+1}, \quad \forall n\in\mathbb{Z},$$
i.e., $T$ has the following matrix representation
\[
\begin{bmatrix}
\ddots\\
\ddots&0&  \\
  &w_{-2}&0\\
  & &w_{-1}&0\\
  & & &w_0&0\\
  & & & &w_1&0\\
  & & & & &\ddots&\ddots\\
\end{bmatrix}
\begin{matrix}
\vdots\\
e_{-2}\\
e_{-1}\\
e_0\\
e_1\\
e_2\\
\vdots
\end{matrix}
.\]

Now we review some known results about the calculations of the power set of quasinilpotent operators.
Let $V$ be the Volterra operator on the Hardy space $H^2(\mathbb{D})$, defined by $(Vf)(z)=\int_{0}^{z}f(\xi)d\xi, f\in H^2(\mathbb{D})$.
Liang and Yang \cite{LY} first observed that $\Lambda(V)=\{1\}$.
Actually, $V$ is a strongly strictly cyclic forward unilateral weighted shift on $H^2(\mathbb{D})$ with weight sequence $\{\frac{1}{n+1}\}_{n=0}^{\infty}$.
In \cite{JiLiu}, Ji and Liu showed that if $T$ is a strongly strictly cyclic quasinilpotent forward unilateral weighted shift on $\ell^2(\mathbb{N})$, then $\Lambda(T)=\{1\}.$
Later,
He and Zhu generalized Ji and Liu's above mentioned result by showing that 
if $T$ is either a strictly cyclic quasinilpotent forward unilateral weighted shift
or a strongly strictly cyclic quasinilpotent operator
$l^2(\mathbb{N})$, then $\Lambda(T)=\{1\}$ \cite{HeZhu}.

Given a weight sequence $\{w_n\}_n$, for $1\leq p<\infty$,
let $T_p$ be the weighted shift with weight sequence $\{w_n\}_n$ on $\ell^p$.
A natural question is:
\begin{ques}\label{Q:inch}
Is the power set $\Lambda(T_p)$ independent of $p$?
\end{ques}
In Section $2$, we prove that if $T_p$ is a quasinilpotent forward unilateral weighted shift on $\ell^p(\mathbb{N})$,
then $\Lambda(T_p)=\{1\}$ when $k_{(T_p,e_0)}=1$.
As an application, we show that if $T_p$ is a strictly cyclic quasinilpotent forward unilateral weighted shift on $\ell^p(\mathbb{N})$, then $\Lambda(T_p)$ are the same for different $p$ and $\Lambda(T_p)=\{1\}$.
This also gives a partial answer to Question \ref{Q:inch}.

Next, we consider the backward weighted shift on $\ell^p(\mathbb{N})$.
Indeed, given a forward weighted shift $A$ on $\ell^p(\mathbb{N})$ with weight sequence $\{w_n\}_{n=0}^{\infty}$,
it is well-known that $A^{\prime}$ is the backward weighted shift on $(l^p(\mathbb{N}))^{\prime}$
with weight sequence $\{w_n\}_{n=0}^{\infty}$.
Now we suppose that $T$ is a quasinilpotent backward unilateral weighted shift on $\ell^p(\mathbb{N})$.
Since $\|(z-T)^{-1}e_n\|_p^p$ is a polynomial in $\frac{1}{z}$ with degree $n+1$, it is not difficult to check that $k_{(T,e_n)}=0$ for each $n\geq0$. 
Moreover, one can also check that $k_{(T,x)}=0$ when $x\in \operatorname{span}\{e_n: n\geq 0\}$.
Thus, it is interesting to consider the following question.
\begin{ques}\label{Q:backshift}
For any quasinilpotent backward unilateral weighted shift $T$ on $\ell^p(\mathbb{N})$, is it true that $\Lambda(T)=\{0\}$?
\end{ques}
In Section $3$, we first prove that the power set of a class of backward unilateral weighted shifts 
can contain $1$. That provides a negative answer to Question \ref{Q:backshift}.
Moreover, we show that the power set of some backward unilateral weighted shifts is $[0,1]$.
In particular, they include the backward unilateral weighted shift $T$ with weight sequence $\{\frac{1}{n+1}\}_{n=0}^{\infty}$.
In fact, $T$ is unicellular and has only countable many invariant subspaces (see \cite{Shs}).
So one can not find a subset $\mathcal{N}$ of $\operatorname{Lat} T$ which is order isomorphic to $\Lambda(T)$.
That also provides a negative answer to Question \ref{Q:oriso}.
In addition, in view of these results, it also provides some information for Question \ref{Q:adjoint}.

In Section $4$, we construct a bilateral weighted shift on $\ell^p(\mathbb{Z})$ for $1< p<\infty$, 
which power set is $[\frac{1}{2},1]$.
This provides a new method to construct a quasinilpotent operator whose power set is an interval segment on $[0,1]$, and brings a new insight into the Question \ref{Q:subset}.

\section{Power set of forward unilateral weighted shifts}
In this section, we aim to investigate the power set of forward unilateral weighted shifts on $\ell^p(\mathbb{N})$ for $1\leq p<\infty$.
Before proceeding, let us recall some basic properties for weighted shifts.

Given a weighted shift $T$ with the weight sequence $\{w_n\}_{n}$, there exists a surjective isometry $U$ on $\ell^p(\mathbb{N})$ such that $U^{-1}TU$ is the weighted shift with weight sequence $\{|w_n|\}_{n}$ (see \cite{Shs}). From this, we say $T$ have circular symmetry. Moreover, if $w_n\neq 0$ for each $n$, then say that $T$ is injective.
Thus, without loss of generality, we may assume that $w_n>0$ for all $n$.
In order to prove the main theorem, we first give the following lemmas.

\begin{lem}\label{F:poly0}
Let $T$ be a quasinilpotent forward unilateral weighted shift on $\ell^p(\mathbb{N})$.
Then, for any polynomial $\varphi$,
$$\lim\limits_{z\rightarrow 0}\frac{\ln|\varphi(|z|)|}{\ln\|(z-T)^{-1}e_0\|_p}=0.$$
\end{lem}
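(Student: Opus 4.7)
My strategy is to obtain a lower bound on $\|(z-T)^{-1}e_0\|_p$ that, as $z\to 0$, grows faster than any polynomial rate in $1/|z|$; once this is available, the polynomial $\varphi(|z|)$ in the numerator becomes asymptotically negligible and the ratio tends to $0$.

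Quasinilpotency makes the Neumann series $(z-T)^{-1}=\sum_{n\geq 0} z^{-(n+1)}T^n$ converge in norm for every $z\neq 0$. Applying it to $e_0$ and using the identity $T^n e_0=W_n e_n$, where $W_n:=w_0 w_1\cdots w_{n-1}$ (with $W_0:=1$), gives
$$(z-T)^{-1}e_0=\sum_{n=0}^{\infty}\frac{W_n}{z^{n+1}}\,e_n.$$
Injectivity of $T$ forces every $w_n>0$, hence every $W_n>0$. Because the $e_n$ have pairwise disjoint supports in $l^p(\mathbb{N})$, retaining only the $n$th term yields the one-term lower bound
$$\|(z-T)^{-1}e_0\|_p\geq\frac{W_n}{|z|^{n+1}},\qquad n\geq 0,$$
and taking logs (valid for $0<|z|<1$) gives $\ln\|(z-T)^{-1}e_0\|_p\geq \ln W_n+(n+1)\ln(1/|z|)$.

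Fixing $n$, dividing by $\ln(1/|z|)$, and letting $z\to 0$ will show $\liminf_{z\to 0}\ln\|(z-T)^{-1}e_0\|_p/\ln(1/|z|)\geq n+1$. Since $n$ is arbitrary, this liminf is $+\infty$; equivalently
$$\lim_{z\to 0}\frac{\ln(1/|z|)}{\ln\|(z-T)^{-1}e_0\|_p}=0.$$
On the numerator side, I would factor the polynomial as $\varphi(t)=t^k\psi(t)$ with $\psi(0)\neq 0$ (allowing $k=0$), so that $\ln|\varphi(|z|)|=-k\ln(1/|z|)+O(1)$ near $0$. Dividing by $\ln\|(z-T)^{-1}e_0\|_p$ and applying the previous display immediately yields the desired limit $0$.

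There is no real obstacle in this argument; its entire content is the one-term lower bound, made almost tautological by the disjoint-support structure of weighted shifts. Notably I do not use the sharp consequence $W_n^{1/n}\to 0$ of quasinilpotency: a single term with $n$ arbitrary but fixed already beats any polynomial rate, because the exponent $n+1$ can be chosen as large as desired while $\ln W_n$ contributes only a bounded error in the numerator of the key ratio.
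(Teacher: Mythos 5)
Your proof is correct and follows essentially the same route as the paper's: both retain a single term of the series $(z-T)^{-1}e_0=\sum_n W_n z^{-(n+1)}e_n$ to show that $\ln\|(z-T)^{-1}e_0\|_p$ eventually dominates $N\ln(1/|z|)$ for every fixed $N$, and then reduce a general $\varphi$ to the monomial case by writing $\varphi(t)=t^k\psi(t)$ with $\psi(0)\neq 0$. The only difference is presentational: you package the conclusion as a liminf over arbitrary fixed $n$ instead of the paper's explicit $\varepsilon$--$\delta$ choice of $N$, $\delta_1$, $\delta_2$, which also lets you bypass the paper's use of quasinilpotency to arrange $\bigl(\prod_{j=0}^{N-1}w_j\bigr)^{1/N}<1$.
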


\begin{proof}
One can refer to the proof in \cite[Lemma 3.6]{JiLiu}.
\end{proof}

\begin{lem}\label{F:Tx=x}
Let $T$ be a quasinilpotent forward unilateral weighted shift on $\ell^p(\mathbb{N})$.
Suppose that $x\in \ell^p(\mathbb{N})$ and $k_x\neq 0$, then $k_{(T^nx)}=k_{x}$ for all $n\geq 0$.
\end{lem}
\begin{proof}
By Lemma \ref{F:poly0}, it is easy to check that
$$\lim\limits_{z\rightarrow 0}\frac{\ln\frac{1}{|z|}}{\ln\|(z-T)^{-1}\|_p}=0.$$
For $z\neq0$, we have 
\begin{align*}
(z-T)^{-1}x&=\frac{1}{z}+\frac{1}{z^2}Tx+\frac{1}{z^3}T^2x+\frac{1}{z^4}T^3x+\cdots\\
&=\frac{1}{z}+\frac{1}{z}(\frac{1}{z}Tx+\frac{1}{z^2}T^2x+\frac{1}{z^3}T^3x+\cdots)\\
&=\frac{1}{z}+\frac{1}{z}(z-T)^{-1}Tx.
\end{align*}
Therefore,
$\frac{1}{|z|}\|(z-T)^{-1}Tx\|_p=\|(z-T)^{-1}x-\frac{1}{z}\|_p$ and 
$$\ln\frac{1}{|z|}+\ln\|(z-T)^{-1}Tx\|_p=\ln\|(z-T)^{-1}x-\frac{1}{z}\|_p.$$
Since $\lim\limits_{z\rightarrow 0}\|(z-T)^{-1}\|=+\infty$ and $k_x\neq 0$, we can obtain that 
$k_{(Tx)}=k_{x}$. Assume that $n=k$, have $k_{(T^kx)}=k_{x}$, we next prove that $k_{(T^{k+1}x)}=k_{x}$.
For $z\neq0$, we also have
\begin{align*}
(z-T)^{-1}T^kx&=\frac{1}{z}T^kx+\frac{1}{z^2}T^{k+1}x+\frac{1}{z^3}T^{k+2}x+\frac{1}{z^4}T^{k+3}x+\cdots\\
&=\frac{1}{z}T^kx+\frac{1}{z}(\frac{1}{z}T^{k+1}x+\frac{1}{z^2}T^{k+2}x+\frac{1}{z^3}T^{k+3}x+\cdots)\\
&=\frac{1}{z}T^kx+\frac{1}{z}(z-T)^{-1}T^{k+1}x.
\end{align*}
Thus,
$\frac{1}{|z|}\|(z-T)^{-1}T^{k+1}x\|_p=\|(z-T)^{-1}x-\frac{1}{z}T^kx\|_p$ and
$$\ln\frac{1}{|z|}+\ln\|(z-T)^{-1}T^{k+1}x\|_p=\ln\|(z-T)^{-1}T^kx-\frac{1}{z}T^kx\|_p.$$
It follows that $k_{(T^{k+1}x)}=k_{x}$.
So, $k_{(T^nx)}=k_{x}$ for all $n\geq 0$.
\end{proof}

%\begin{lem}\label{F:en}
%Let $T$ be a quasinilpotent forward unilateral weighted shift on $\ell^p(\mathbb{N})$
%with weight sequence $\{w_n\}_{n=0}^{\infty}$.
%If $k_{e_0}\neq 0$, 
%then
%$k_{e_{n}}=k_{e_{0}}$ for all $n\geq 1$.
%\end{lem}
%\begin{proof}
%For every $n\geq1$, we have $T^n e_0=\prod\limits_{j=0}^{n-1}w_j e_n$.
%Since $k_{(T^ne_0)}=k_{e_0}$ and $k_{(\prod\limits_{j=0}^{n-1}w_j e_n)}=k_{e_n}$, it holds that 
%$k_{e_{n}}=k_{e_{0}}$ for all $n\geq 1$.
%\end{proof}

For $k\geq 0$, set $f_k(x)=x_k$, $\forall x=\{x_n\}_{n=0}^\infty\in l^p(\mathbb{N})$. Then $f_k\in (l^p(\mathbb{N}))'$, the dual space of $\ell^p(\mathbb{N})$, and $\|f_k\|=1$.
If $1<p<\infty$, for $f\in (l^p(\mathbb{N}))'$, there is unique $\xi=\{\xi_k\}_{k=0}^\infty\in l^q(\mathbb{N})$ such that $f=\sum\limits_{k=0}^\infty \xi_k f_k$, where $q=\frac{p}{p-1}$. In this case, $\|f\|=(\sum\limits_{k=0}^{\infty}|\xi_k|^q)^{\frac{1}{q}}$.
If $p=1$, for $f\in (l^p(\mathbb{N}))'$, there is unique $\xi=\{\xi_k\}_{k=0}^\infty\in l^{\infty}(\mathbb{N})$ such that $f=\sum\limits_{k=0}^\infty \xi_k f_k$ in weak-star topology, and $\|f\|=\sup\limits_{k}|\xi_k|$.

For convenience, we write
$$\|f\|_q=\begin{cases}\big(\sum\limits_{k=0}^{\infty}|\xi_k|^q\big)^{\frac{1}{q}}&\textrm{ if }
q\neq \infty;\\
\sup\limits_{k}|\xi_k|&\textrm{ if } q=\infty.\end{cases}$$

\begin{lem}\label{kx>ke0}
Let $T$ be a quasinilpotent forward unilateral weighted shift on $\ell^p(\mathbb{N})$
with weight sequence $\{w_n\}_{n=0}^{\infty}$.
If $k_{e_0}\neq 0$,
then
$k_x\geq k_{e_0}$ for any non-zero $x \in l^p(\mathbb{N})$.
\end{lem}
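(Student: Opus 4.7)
The plan is to exploit the rotational symmetry of $T$ (available because one may assume $w_n\geq 0$) together with an averaging argument over the argument of $z=re^{i\theta}$. Let $m=\min\{n:x_n\neq 0\}$ and set $\beta_0=1$, $\beta_k=w_0w_1\cdots w_{k-1}$ as in the proof of Lemma~\ref{F:en}. Define the isometry $U_\theta$ on $l^p(\mathbb{N})$ by $U_\theta e_n=e^{in\theta}e_n$. A direct computation yields $U_\theta T U_\theta^{-1}=e^{i\theta}T$, so that
\[
(re^{i\theta}-T)^{-1}=e^{-i\theta}U_{-\theta}(r-T)^{-1}U_\theta.
\]
Consequently $\|(z-T)^{-1}\|$ and $\|(z-T)^{-1}e_0\|_p$ depend only on $r=|z|$, while $\|(z-T)^{-1}x\|_p=\|(r-T)^{-1}U_\theta x\|_p$.

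The $e_j$-coordinate of $(r-T)^{-1}U_\theta x$ equals $\beta_j r^{-(j+1)}h_j(r,\theta)$, where $h_j(r,\theta)=\sum_{n=0}^{j}x_n e^{in\theta}r^n/\beta_n$. As a trigonometric polynomial in $\theta$, the $m$-th Fourier coefficient of $h_j(r,\cdot)$ is $x_m r^m/\beta_m$, so Jensen's inequality gives, for every $j\geq m$,
\[
\frac{|x_m|^p r^{pm}}{\beta_m^p}\leq \frac{1}{2\pi}\int_0^{2\pi}|h_j(r,\theta)|^p\,d\theta.
\]
Multiplying by $(\beta_j/r^{j+1})^p$ and summing over $j$ (swap of sum and integral is legal by Fubini, since $\|(r-T)^{-1}U_\theta x\|_p\leq \|(r-T)^{-1}\|\,\|x\|_p$ is bounded in $\theta$) yields
\[
\frac{1}{2\pi}\int_0^{2\pi}\|(r-T)^{-1}U_\theta x\|_p^p\,d\theta \geq \frac{|x_m|^p r^{pm}}{\beta_m^p}\bigl(A(r)-\psi_m(r)\bigr),
\]
where $A(r)=\|(r-T)^{-1}e_0\|_p^p$ and $\psi_m(r)=\sum_{k=0}^{m-1}\beta_k^p r^{-p(k+1)}$. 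Since $\psi_m$ is a polynomial in $r^{-1}$ of degree $pm$ while $A(r)\geq \beta_m^p r^{-p(m+1)}$, we have $\psi_m(r)/A(r)\to 0$ as $r\to 0^+$.

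Hence for each small $r>0$ one may pick $\theta(r)\in[0,2\pi)$ with
\[
\|(r-T)^{-1}U_{\theta(r)}x\|_p \geq \frac{|x_m|}{2\beta_m}\,r^m\,\|(r-T)^{-1}e_0\|_p.
\]
Setting $z(r)=re^{i\theta(r)}$, taking logarithms, dividing by $\ln\|(z(r)-T)^{-1}\|=\ln\|(r-T)^{-1}\|$, and invoking Lemma~\ref{F:poly0} with $\varphi(t)=t$ (which, together with $\|(r-T)^{-1}\|\geq \|(r-T)^{-1}e_0\|_p$, gives $\ln r/\ln\|(r-T)^{-1}\|\to 0$), one obtains
\[
\frac{\ln\|(z(r)-T)^{-1}x\|_p}{\ln\|(z(r)-T)^{-1}\|}\geq \frac{\ln\|(r-T)^{-1}e_0\|_p}{\ln\|(r-T)^{-1}\|}+o(1).
\]
Taking $\limsup$ as $r\to 0^+$ and observing that, by the circular symmetry noted above, the limsup of the right-hand side equals $k_{e_0}$, we conclude $k_x\geq k_{e_0}$.

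The main obstacle is cancellation control at the Fourier step: on a single ray $z=r$ the scalar $h_j(r)$ may suffer heavy sign cancellation, so no fixed $z$ need witness the desired lower bound directly. Averaging over $\theta$ circumvents this by promoting the $m$-th Fourier mode $x_m r^m/\beta_m$ of $h_j(r,\cdot)$ to an $L^p$ lower bound \emph{uniformly in $j\geq m$} via Jensen; these bounds then combine cleanly against the weights $(\beta_j/r^{j+1})^p$ that build up $A(r)$ and survive the subtraction of the polynomially-bounded tail $\psi_m(r)$.
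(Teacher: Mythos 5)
Your proof is correct and rests on the same key idea as the paper's own argument: averaging $\|(re^{i\theta}-T)^{-1}x\|_p$ over $\theta$ to isolate the first nonvanishing Fourier mode $x_m$ (thereby defeating cancellation), and then invoking Lemma~\ref{F:poly0} to discard the resulting polynomial factors in $r$. The only cosmetic differences are that you extract the mode coordinatewise via Jensen/H\"older and compare directly with $\|(r-T)^{-1}e_0\|_p$, whereas the paper tests against a dual functional and passes through $k_{e_{n_0}}=k_{e_0}$ (Lemma~\ref{F:en}).
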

\begin{proof}
For $n\geq0$, we have
\begin{align*}
(z-T)^{-1}e_n&=\sum\limits_{k=0}^{\infty}\frac{1}{z^{k+1}}T^k e_n=\sum\limits_{k=0}^{\infty}
\frac{w_nw_{n+1}\cdots w_{n+k-1}}{z^{k+1}}e_{n+k}\\
&=\sum\limits_{k=0}^{\infty}\frac{\beta_{n+k}}{\beta_n z^{k+1}}e_{n+k}=\beta_n^{-1}z^{n-1}\sum\limits_{k=n}^{\infty}\frac{\beta_k}{z^k}e_k.
\end{align*}
Suppose that $x=\sum\limits_{n=0}^{\infty}\hat{x}_{n}e_n\in l^{p}(\mathbb{N})$ and $x\neq 0$,  then there exists a non-negative integer $n_0$
such that $\hat{x}_{n_0}\neq 0$ and $\hat{x}_{j}=0, \forall j<n_0$. Thus,
\begin{align*}
{(z-T)^{-1}x}&=\sum\limits_{n=0}^{\infty}\hat{x}_{n}(z-T)^{-1}e_n
=\sum\limits_{n=n_0}^{\infty}\hat{x}_{n}(z-T)^{-1}e_n\\
&=\sum\limits_{n=n_0}^{\infty}\hat{x}_{n}
\beta_{n}^{-1}z^{n-1}\sum\limits_{k=n}^{\infty}\frac{\beta_k}{z^k}e_k
=\sum\limits_{k=n_0}^{\infty}\frac{\beta_k}{z^k}(\sum\limits_{n=n_0}^{k}\hat{x}_{n}\beta_{n}^{-1}z^{n-1})e_k.
\end{align*}
It follows that
\begin{align*}
\|(z-T)^{-1}x\|_p
&=\sup\Big\{|f((z-T)^{-1}x)|: f\in(l^p(\mathbb{N}))', \|f\|=1\Big\}\\
&=\sup\Big\{\sum\limits_{k=n_0}^{\infty}\big|\frac{\xi_k\beta_k}{z^k}\big|\big|\sum\limits_{n=n_0}^{k}\hat{x}_n\beta_n^{-1}z^{n-1}\big|:
f=\{\xi_k\}_{k=0}^{\infty}\in l^q(\mathbb{N}), \|f\|_q=1\Big\}\\
&\geq \sum\limits_{k=n_0}^{\infty}\big|\frac{\xi_k\beta_k}{z^k}\big|\big|\sum\limits_{n=n_0}^{k}\hat{x}_n\beta_n^{-1}z^{n-1}\big|.
\end{align*}
We next compute the integral mean value of $\|(z-T)^{-1}x\|_p$.
Let $z=re^{i\theta}$ with $r=|z|$. Then
\begin{align*}
\frac{1}{2\pi}\int_{0}^{2\pi}\|(re^{i\theta}-T)^{-1}x\|_p d\theta
&\geq
\frac{1}{2\pi}\int_{0}^{2\pi}\sum\limits_{k=n_0}^{\infty}\big|\frac{\xi_k\beta_k}{(re^{i\theta})^k}\big|\big|\sum\limits_{n=n_0}^{k}\hat{x}_n\beta_n^{-1}(re^{i\theta})^{n-1}\big|d\theta\\
&=\sum\limits_{k=n_0}^{\infty}\big|\frac{\xi_k\beta_k}{r^k}\big|\frac{1}{2\pi}\int_{0}^{2\pi}\big|\sum\limits_{n=n_0}^{k}\hat{x}_n\beta_n^{-1}r^{n-1}e^{i(n-n_0)\theta}\big|d\theta\\
&\geq\sum\limits_{k=n_0}^{\infty}\big|\frac{\xi_k\beta_k}{r^k}\big|\frac{1}{2\pi}\big|\int_{0}^{2\pi}\sum\limits_{n=n_0}^{k}\hat{x}_n\beta_n^{-1}r^{n-1}e^{i(n-n_0)\theta}d\theta\big|\\
&=\sum\limits_{k=n_0}^{\infty}\big|\frac{\xi_k\beta_k}{r^k}\big|\big|\hat{x}_{n_0}\beta_{n_0}^{-1}r^{n_{0}-1}\big|\\
&\geq\big|\hat{x}_{n_0}\big|\big|\sum\limits_{k=n_0}^{\infty}\frac{\xi_k\beta_k}{r^k}\beta_{n_0}^{-1}r^{n_{0}-1}\big|\\
&=\big|\hat{x}_{n_0}\big|\big|f((z-T)^{-1}e_{{n_0}})\big|,
\end{align*}
where $f=\{\xi_k\}_{k=0}^{\infty}\in (l^p(\mathbb{N}))'$ and $\|f\|_q=1$. Thus for any $|z|=r$, we have
\begin{align*}
\sup_{\theta\in[0,2\pi]}\|(re^{i\theta}-T)^{-1}x\|_p
&\geq{\frac{1}{2\pi}\int_{0}^{2\pi}\|(re^{i\theta}-T)^{-1}x\|_p d\theta}\\
&\geq |x_{n_0}|\|(z-T)^{-1}e_{n_0}\|_p.
\end{align*}
Pick $z=re^{i\theta}$ so that
$$\|(z-T)^{-1}x\|_p=\sup_{\theta\in[0,2\pi]}\|(re^{i\theta}-T)^{-1}x\|_p.$$
Thereby,
\begin{equation}\label{F:re}
\|(re^{i\theta}-T)^{-1}x\|_p\geq|\hat{x}_{n_0}|\|(re^{i\theta}-T)^{-1}e_{n_0}\|_p.
\end{equation}
Note that for $z\neq 0$,
$$\|(z-T)^{-1}e_{n_0}\|_p=\|(|z|-T)^{-1}e_{n_0}\|_p.$$
And since $T$ have circular symmetry, we have that
$$\|(z-T)^{-1}\|=\|(|z|-T)^{-1}\|, ~~\forall z\neq 0.$$
So
\begin{align*}
k_{e_{n_0}}=\limsup\limits_{|z|\rightarrow 0}\frac{\ln\|(|z|-T)^{-1}e_{n_0}\|_p}{\ln\|(|z|-T)^{-1}\|}.
\end{align*}
We can choose a sequence $\{r_j\}_{j=0}^{\infty}\subset[0,+\infty)$ with $\lim\limits_{j\rightarrow \infty}r_j=0$ such that
\begin{equation}\label{F:ken0}
k_{e_{n_0}}=\lim\limits_{j\rightarrow \infty}\frac{\ln\|(r_j-T)^{-1}e_{n_0}\|_p}{\ln\|(r_j-T)^{-1}\|}.
\end{equation}
By (\ref{F:re}), one can pick a sequence $\{z_j\}_{j=0}^{\infty}\subset\mathbb{C}$ with $|z_j|=r_j$ such that
\begin{equation}\label{F:ken}
\liminf_{j\rightarrow \infty}\frac{\ln\|(z_j-T)^{-1}x\|_p}{\ln\|(z_j-T)^{-1}e_{n_0}\|_p}\geq 1.
\end{equation}
Combining (\ref{F:ken0}) and (\ref{F:ken}), we can deduce that
\begin{align*}
{k_x}&=\limsup_{z\rightarrow 0}\frac{\ln\|(z-T)^{-1}x\|_p}{\ln\|(z-T)^{-1}\|}\\
&\geq\liminf_{j\rightarrow \infty}\frac{\ln\|(z_j-T)^{-1}x\|_p}{\ln\|(z_j-T)^{-1}\|}\\
&=\liminf_{j\rightarrow \infty}\frac{\ln\|(z_j-T)^{-1}x\|_p}{\ln\|(z_j-T)^{-1}e_{n_0}\|_p}
\lim_{j\rightarrow \infty}\frac{\ln\|(z_j-T)^{-1}e_{n_0}\|_p}{\ln\|(z_j-T)^{-1}\|}\\
&=\liminf_{j\rightarrow \infty}\frac{\ln\|(z_j-T)^{-1}x\|_p}{\ln\|(z_j-T)^{-1}e_{n_0}\|_p}
\lim_{j\rightarrow \infty}\frac{\ln\|(r_j-T)^{-1}e_{n_0}\|_p}{\ln\|(r_j-T)^{-1}\|}\\
&\geq k_{e_{n_0}}.
\end{align*}
Moreover, since $T^{n_{0}} e_0=\prod\limits_{j=0}^{n_{0}-1}w_j e_{n_0}$ and by Lemma \ref{F:Tx=x}, 
we have $k_{e_{n_0}}=k_{e_{0}}$.
Thus, it holds that $k_x\geq k_{e_0}$ for any non-zero $x\in l^p(\mathbb{N})$.
\end{proof}

From the above lemma, we can obtain the following theorem.
\begin{thm}\label{F:ke01}
Suppose that $T$ is a quasinilpotent forward unilateral weighted shift on $\ell^p(\mathbb{N})$.
If $k_{e_0}=1$, then $\Lambda(T)=\{1\}$.
\end{thm}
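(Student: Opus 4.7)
The plan is to observe that this theorem is a direct consequence of Lemma \ref{kx>ke0} combined with the general a priori bound on $k_x$ recorded in the introduction. Specifically, the introduction notes that the inequalities
\[
\frac{\|x\|}{|z|+\|T\|}\leq\|(z-T)^{-1}x\|\leq\|(z-T)^{-1}\|\,\|x\|,\qquad z\ne 0,
\]
together with $\lim_{z\to 0}\|(z-T)^{-1}\|=+\infty$, force $\Lambda(T)\subset[0,1]$; in particular $k_x\le 1$ for every non-zero $x$.

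The proof then proceeds in two short steps. First, I would invoke Lemma \ref{kx>ke0}, which applies because $T$ is by hypothesis an injective quasinilpotent forward unilateral weighted shift on $l^p(\mathbb{N})$. That lemma yields $k_x\ge k_{e_0}$ for every non-zero $x\in l^p(\mathbb{N})$. Second, I would plug in the assumption $k_{e_0}=1$ to obtain $k_x\ge 1$ for every such $x$. Combining with the upper bound $k_x\le 1$ from the preceding paragraph gives $k_x=1$ for all non-zero $x$, whence $\Lambda(T)=\{k_x:x\ne 0\}=\{1\}$.

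There is essentially no obstacle here: all the substantive work has already been carried out in Lemmas \ref{F:poly0}, \ref{F:en}, and \ref{kx>ke0}. The only thing to check is that Lemma \ref{kx>ke0} is being applied under its stated hypotheses (injectivity and quasinilpotency of the forward unilateral weighted shift), both of which are part of the hypotheses of this theorem. Thus the argument reduces to a one-line invocation of Lemma \ref{kx>ke0} followed by the trivial squeeze between $k_{e_0}=1$ and the universal upper bound $k_x\le 1$.
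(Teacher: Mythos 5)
Your proposal is correct and coincides with the paper's own proof: both invoke Lemma \ref{kx>ke0} to obtain $k_x\geq k_{e_0}=1$ for all non-zero $x$, and then conclude via the universal bound $0\leq k_x\leq 1$. No further comment is needed.
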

\begin{proof}
For any non-zero $x\in l^p(\mathbb{N})$, by Lemma $\ref{kx>ke0}$ and the fact $0\leq k_x\leq 1$, we have
$k_x=1$.
Hence, $\Lambda(T)=\{1\}$.
\end{proof}
Next, we apply Theorem \ref{F:ke01} to give the following two corollaries.

\begin{cor}
Let $T$ is a forward unilateral weighted shift on $l^1(\mathbb{N})$ with weight sequence $\{w_n\}_{n=0}^\infty$. If $\{w_n\}_{n=0}^\infty$ is decreasing to $0$, then $\Lambda(T)=\{1\}$.
\end{cor}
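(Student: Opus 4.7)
The plan is to reduce the statement to Theorem \ref{F:ke01} by showing that $k_{e_{0}}=1$, and to exploit the special structure of the operator norm on $l^{1}$ together with the monotonicity of the weights.

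First I would note that the hypothesis that $\{w_n\}$ decreases to $0$ already gives quasinilpotency: with $\beta_n = w_0 w_1 \cdots w_{n-1}$, the identity $T^n e_k = \frac{\beta_{n+k}}{\beta_k} e_{n+k}$ together with the decreasing property of $\{w_n\}$ yields $\|T^n\| = \sup_{k}\prod_{j=k}^{k+n-1}w_j = \beta_n$, and the Cesàro argument $(w_0\cdots w_{n-1})^{1/n}\to 0$ gives $\|T^n\|^{1/n}\to 0$. Hence $T$ is quasinilpotent, and Theorem \ref{F:ke01} applies once $k_{e_{0}}=1$ is established.

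The key computation is for the resolvent norms. For $z\neq 0$, expanding the Neumann series gives
\[
(z-T)^{-1}e_n \;=\; \sum_{k=0}^{\infty}\frac{\beta_{n+k}}{\beta_n\,z^{k+1}}\,e_{n+k},
\]
so that on $l^{1}(\mathbb{N})$, since all the coefficients $\beta_{n+k}/\beta_n$ are non-negative,
\[
\|(z-T)^{-1}e_n\|_1 \;=\; \sum_{k=0}^{\infty}\frac{\beta_{n+k}}{\beta_n\,|z|^{k+1}}
\;=\; \sum_{k=0}^{\infty}\frac{\prod_{j=0}^{k-1}w_{n+j}}{|z|^{k+1}}.
\]
The decreasing assumption $w_{n+j}\leq w_j$ forces $\prod_{j=0}^{k-1}w_{n+j}\leq\prod_{j=0}^{k-1}w_j = \beta_k$ for every $n,k\geq 0$, so termwise
\[
\|(z-T)^{-1}e_n\|_1 \;\leq\; \sum_{k=0}^{\infty}\frac{\beta_k}{|z|^{k+1}} \;=\; \|(z-T)^{-1}e_0\|_1.
\]

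Now I invoke the elementary fact that for any bounded operator $A$ on $l^{1}(\mathbb{N})$, $\|A\|=\sup_{n}\|Ae_n\|_1$ (one direction is trivial; the other follows from $\|Ax\|_1 \leq \sum_n|x_n|\|Ae_n\|_1\leq(\sup_n\|Ae_n\|_1)\|x\|_1$). Applying this to $A=(z-T)^{-1}$ together with the monotonicity above yields
\[
\|(z-T)^{-1}\| \;=\; \sup_{n\geq 0}\|(z-T)^{-1}e_n\|_1 \;=\; \|(z-T)^{-1}e_0\|_1
\]
for all $z\neq 0$. Consequently
\[
k_{e_0} \;=\; \limsup_{z\to 0}\frac{\ln\|(z-T)^{-1}e_0\|_1}{\ln\|(z-T)^{-1}\|} \;=\; 1,
\]
and Theorem \ref{F:ke01} gives $\Lambda(T)=\{1\}$.

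The only genuine content is the monotonicity comparison $\|(z-T)^{-1}e_n\|_1\leq\|(z-T)^{-1}e_0\|_1$, which is where the decreasing weight hypothesis enters decisively. I do not expect any real obstacle beyond this; the argument relies on the fact that the $l^{1}$ operator norm is controlled columnwise and that positive, decreasing weights make $e_0$ the ``worst'' column. This columnwise characterization is specific to $l^{1}$, so the same strategy will not extend verbatim to $l^{p}$ with $p>1$, which is consistent with the paper stating this corollary only in the $p=1$ case.
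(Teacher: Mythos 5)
Your proposal is correct and takes essentially the same route as the paper: both arguments reduce to showing $\|(z-T)^{-1}\|=\|(z-T)^{-1}e_0\|_1$ for all $z\neq 0$ and then invoke Theorem~\ref{F:ke01} via $k_{e_0}=1$. The only (immaterial) difference is in how that identity is obtained --- the paper applies the triangle inequality to the Neumann series together with $\|T^n\|=\beta_n$, whereas you use the columnwise characterization $\|A\|=\sup_n\|Ae_n\|_1$ of the $l^1$ operator norm and compare columns termwise.
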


\begin{proof}
It is easy to check that
$$\|T^{n}\|=\|T^{n}e_0\|=\prod\limits_{j=0}^{n-1}w_j$$ and $\sigma(T)=\{0\}$.
For $z\neq 0$, we have
\begin{align*}
\|(z-T)^{-1}\|
=\big\|\sum\limits_{n=0}^{\infty}\frac{T^n}{z^{n+1}}\big\|
&\leq\sum\limits_{n=0}^{\infty}\frac{\|T^n\|}{|z|^{n+1}}
=\sum\limits_{n=0}^{\infty}\frac{1}{|z|^{n+1}}\prod\limits_{j=0}^{n-1}w_j
\end{align*}
and 
$\|(z-T)^{-1}e_{0}\|_1\leq \|(z-T)^{-1}\|$.
Thus, $\|(z-T)^{-1}e_{0}\|_1=\|(z-T)^{-1}\|$
and
\begin{equation*}
k_{e_0}=\limsup_{z\rightarrow0}\frac{\ln\|(z-T)^{-1}e_{0}\|_1}{\ln\|(z-T)^{-1}\|}=\limsup_{z\rightarrow0}\frac{\ln\|(z-T)^{-1}\|}{\ln\|(z-T)^{-1}\|}=1.
\end{equation*}
By Theorem \ref{F:ke01}, we have $\Lambda{(T)}=\{1\}$.
\end{proof}
\begin{rem}
In \cite{ME73}, Embry showed that if $T$ is a forward unilateral weighted shift on $l^1(\mathbb{N})$ with weight sequence decreasing to $0$, then $T$ is strictly cyclic.
However, this is not valid for $p>1$.
For counterexample, we can see \cite{GHF78}.
Next, we will prove that if $T$ is strictly cyclic on $\ell^p(\mathbb{N})$ for $1\leq p<\infty$, then $\Lambda(T)=\{1\}$.
\end{rem}

Recall that $T\in \mathcal{B}(X)$ is called strictly cyclic if there exists a vector $x_{0}\in X$ such that $\{Ax_{0}:A \in \mathcal{A}(T)\}=X$,
where $\mathcal{A}(T)$ denote the closed subalgebra generated by the identity $I$ and $T$ in the weak operator topology.
Such vector $x_{0}$ is called a strictly cyclic vector for $T$ (see \cite{Shs}).
\begin{cor}\label{F:sc1}
Let $T$ be a strictly cyclic quasinilpotent forward unilateral weighted shift on $\ell^p(\mathbb{N})$ for $1\leq p<\infty$, then $\Lambda(T)=\{1\}$.
\end{cor}

\begin{proof}
 Obviously, $e_0$ is a cyclic vector for $T$. Let $\Phi$ be the map from $\mathcal{A}(T)$ to $X$ such that
$\Phi(A)=Ae_0$ for $A\in \mathcal{A}(T)$.
Then $\Phi$ is invertible (see {\cite[Page 93]{Shs}}).
So, there exists a constant $c>0$ such that
$$c\|A\|\leq\|Ae_0\|\leq\|A\|,\quad\forall A\in\mathcal{A}(T).$$
Since $T$ quasinilpotent, for $z\neq 0$, $(z-T)^{-1}\in \mathcal{A}(T)$.
Thus
\begin{align*}
\frac{\ln\|(z-T)^{-1}e_0\|_p}{\ln \|(z-T)^{-1}\|}
\geq\frac{\ln C\|(z-T)^{-1}\|}{\ln\|(z-T)^{-1}\|}
=\frac{\ln C}{\ln\|(z-T)^{-1}\|}+1
\end{align*}
and $\liminf\limits_{z\rightarrow 0}\frac{\ln\|(z-T)^{-1}e_0\|_p}{\ln\|(z-T)^{-1}\|}\geq 1$. It follows from $0\leq k_{e_0}\leq 1$ that
\begin{align*}
k_{e_0}=\lim\limits_{z\rightarrow 0}\frac{\ln\|(z-T)^{-1}e_0\|_p}{\ln\|(z-T)^{-1}\|}=1.
\end{align*}
By Theorem \ref{F:ke01}, we have $\Lambda(T)=\{1\}$.
\end{proof}

%%% ----------------------------------------------------------------------

\section{Some backward weighted shifts with power set $[0,1]$}

For $1\leq p<\infty$, let $T$ be a quasinilpotent backward unilateral weighted shift on $\ell^p(\mathbb{N})$ with weight sequence $\{w_n\}_{n=1}^{\infty}$.
Given $m\geq1$, for $z\neq0$,
\begin{align*}
\|(z-T)^{-1}e_m\|_p^p
&=\|\sum\limits_{i=0}^{m}\frac{1}{z^{i+1}}T^ie_m\|_p^p
=\|\frac{e_m}{z}+\sum\limits_{i=1}^{m}\big(\frac{1}{z^{i+1}}\prod\limits_{j=0}^{i-1}w_{m-j}\big)e_{m-i}\|_p^p\\
&=\frac{1}{|z|^p}+\sum\limits_{i=1}^{m}\Big|\frac{1}{z^{i+1}}\prod\limits_{j=0}^{i-1}w_{m-j}\Big|^p.
\end{align*}
Since $\|(z-T)^{-1}e_N\|_p\leq\|(z-T)^{-1}\|$ for any $N \in\mathbb{N}$, it follows that
\begin{align*}
k_{e_m}&=\limsup_{z\rightarrow0}\frac{\ln\|(z-T)^{-1}e_m\|_p}{\ln\|(z-T)^{-1}\|}\\
&\leq\limsup_{z\rightarrow0}\frac{\ln\|(z-T)^{-1}e_m\|_p}{\ln\|(z-T)^{-1}e_N\|_p}
=\frac{m+1}{N+1}.
\end{align*}
Thus, we have $k_{e_m}=0$, because the above formula holds for any $N \in\mathbb{N}$.
Further, it is obvious that $k_{e_0}=0$.
Therefore, we conclude that $k_{e_m}=0$ for every $ m\geq0$.
From this, it is easy to check that $k_x=0$ when $x\in \operatorname{span}\{e_n: n\geq 0\}$.
So, this gives people a false impression that $\Lambda(T)=\{0\}$.
Actually, that is not the case.
In this section, we will prove that the power set of a class of backward unilateral weighted shifts is $[0,1]$.

Throughout this section, for $k\geq 0$, we set $f_k(x)=x_k$ for any $x=\{x_n\}_{n=0}^\infty\in l^p(\mathbb{N})$.
We first give the following Theorem.

\begin{thm}\label{B:contain1}
Let $T$ be a backward unilateral weighted shift on $\ell^p(\mathbb{N})$
with decreasing weight sequence $\{w_n\}_{n=1}^{\infty}$.
If $\{w_n\}_{n=1}^{\infty}$ is $p^{\prime}$-summable for some positive number $p^{\prime}$,
then $1\in\Lambda (T)$.
\end{thm}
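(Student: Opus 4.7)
My plan is to exhibit a specific vector $x\in l^p(\mathbb{N})$ satisfying $k_x=1$, using the $p^{\prime}$-summability to balance the summability of $x$ against the growth of $\|(r-T)^{-1}x\|_p$ as $r\to 0$. Set
$$x=e_0+\sum_{n=1}^\infty w_n^{p^{\prime}/p}e_n.$$
Then $\|x\|_p^p=1+\sum_{n\geq 1}w_n^{p^{\prime}}<\infty$, so $x\in l^p(\mathbb{N})$. Writing $\beta_n:=w_1\cdots w_n$ (with $\beta_0=1$), the identity $(z-T)^{-1}e_n=\sum_{j=0}^n(\beta_n/\beta_j)\,z^{-(n-j+1)}e_j$ shows that for $r>0$ the $e_0$-coefficient of $(r-T)^{-1}x$ is the non-negative quantity $\frac{1}{r}+\sum_{n\geq 1}w_n^{p^{\prime}/p}\beta_n r^{-(n+1)}$, whence
$$\|(r-T)^{-1}x\|_p\ \geq\ \frac{1}{r}+\sum_{n=1}^\infty\frac{w_n^{p^{\prime}/p}\beta_n}{r^{n+1}}.$$
Since $\limsup_{z\to 0}\geq\limsup_{r\to 0^+}$ and $k_x\leq 1$ in general, it is enough to prove $\ln\|(r-T)^{-1}x\|_p/\ln\|(r-T)^{-1}\|\to 1$ along $r\to 0^+$.

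Let $m^*=m^*(r)=\max\{n\geq 0:w_n\geq r\}$ (with $w_0:=+\infty$). The sequence $\{\beta_n/r^{n+1}\}_n$ is unimodal with maximum $M(r):=\beta_{m^*}/r^{m^*+1}$ at $n=m^*$, and since $w_{m^*}\geq r$ the single $n=m^*$ term in the lower bound gives
$$\|(r-T)^{-1}x\|_p\ \geq\ w_{m^*}^{p^{\prime}/p}\,M(r)\ \geq\ r^{p^{\prime}/p}\,M(r),$$
so $\ln\|(r-T)^{-1}x\|_p\geq\ln M(r)-(p^{\prime}/p)\ln(1/r)$. For the denominator, monotonicity of $\{w_n\}$ gives $\|T^n\|=\beta_n$, hence the Neumann series yields $\|(r-T)^{-1}\|\leq G(r):=\sum_{n\geq 0}\beta_n r^{-(n+1)}$, so $\ln\|(r-T)^{-1}\|\leq\ln M(r)+\ln(G(r)/M(r))$. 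It therefore suffices to show that both $\ln(1/r)$ and $\ln(G(r)/M(r))$ are $o(\ln M(r))$ as $r\to 0^+$.

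The first statement is a consequence of quasinilpotency: for every fixed $n_0$, $\ln M(r)\geq(n_0+1)\ln(1/r)+\ln\beta_{n_0}$, so $\liminf_{r\to 0^+}\ln M(r)/\ln(1/r)\geq n_0+1$ for all $n_0$, and the liminf is $+\infty$. The second statement is the main technical point. I would split $G(r)=\sum_{n\leq m^*}\beta_n r^{-(n+1)}+M(r)\sum_{k\geq 1}\prod_{j=1}^k(w_{m^*+j}/r)$; the left part is bounded by $(m^*+1)M(r)$. For the tail, the $p^{\prime}$-summability combined with the monotonicity of $\{w_n\}$ gives $n\,w_n^{p^{\prime}}\leq\sum_{k\leq n}w_k^{p^{\prime}}\leq S$, so $w_n\leq S^{1/p^{\prime}}n^{-1/p^{\prime}}$; this forces $m^*=O(r^{-p^{\prime}})$, and similarly there is a cutoff $J=O(r^{-p^{\prime}})$ beyond which $w_{m^*+j}/r\leq 1/2$, so the tail sum is dominated by $(J-m^*)+\sum_{k\geq 1}(1/2)^k=O(r^{-p^{\prime}})$. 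Therefore $G(r)/M(r)=O(r^{-p^{\prime}})$, and $\ln(G(r)/M(r))=O(\ln(1/r))=o(\ln M(r))$. Combining the estimates, the ratio tends to $1$, giving $k_x=1$ and hence $1\in\Lambda(T)$.

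The main obstacle is the polynomial bound $G(r)/M(r)=O(r^{-p^{\prime}})$ in the final step: it is the only place where the $p^{\prime}$-summability hypothesis is genuinely required. Without it, the tail sum $\sum_{k\geq 1}\prod(w_{m^*+j}/r)$ could fail to be polynomial in $1/r$, and the crucial comparison $\ln G(r)\sim\ln M(r)$ matching the dominant-term lower bound for $\|(r-T)^{-1}x\|_p$ to the Neumann-series upper bound for $\|(r-T)^{-1}\|$ would break down.
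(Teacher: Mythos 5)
Your proposal is correct, and it reaches the conclusion by a genuinely different route from the paper's. The paper takes the test vector $x=\{\alpha_n\}_{n\ge 0}$ with $\alpha_n=\prod_{j=1}^{m}w_{n+j}$ for a fixed $m$ chosen (via the $p'$-summability and monotonicity) so that this sequence is summable; the point of that choice is the identity $f_0(T^nx)=\prod_{j=1}^{n+m}w_j=\beta_{n+m}$, which makes $f_0\big((z-T)^{-1}x\big)$ equal to $\alpha_0/z$ plus $z^m$ times the tail of the very series $\sum_k \beta_k z^{-(k+1)}$ that bounds $\|(z-T)^{-1}\|$ from above (decreasing weights give $\|T^k\|=\beta_k$, exactly as in your argument). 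The quotient of logarithms then tends to $1$ almost by inspection, the only remaining input being that $\ln\sum_k\beta_k|z|^{-(k+1)}$ dominates $\ln(1/|z|)$ --- the same fact you establish via $M(r)\ge \beta_{n_0}r^{-(n_0+1)}$ (which, as you could note, really uses injectivity rather than quasinilpotence per se). You instead take coefficients $w_n^{p'/p}$ and run a dominant-term (Laplace-type) analysis: both $\ln\big|f_0\big((r-T)^{-1}x\big)\big|$ and $\ln\|(r-T)^{-1}\|$ equal $\ln M(r)+O(\ln(1/r))$ with $M(r)=\max_n \beta_n r^{-(n+1)}$, and the error is negligible. This costs you the extra estimate $G(r)/M(r)=O(r^{-p'})$, which you correctly isolate as the crux and correctly derive from $w_n\le S^{1/p'}n^{-1/p'}$, so that $m^*$ and the cutoff $J$ are $O(r^{-p'})$ (your bound ``$(J-m^*)+\sum_k 2^{-k}$'' should be read as: at most $J-m^*$ of the partial products exceed the geometric regime, each being at most $1$; the conclusion is unaffected). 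The paper's choice of vector makes the numerator--denominator comparison essentially an identity and is therefore shorter; your version proves the stronger and reusable asymptotic $\ln\|(r-T)^{-1}\|\sim\ln M(r)$ as $r\to 0^+$, and the single-term lower bound $w_{m^*}^{p'/p}M(r)\ge r^{p'/p}M(r)$ is a clean way of seeing how the $p'$-summability enters on the numerator side as well.
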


\begin{proof}
Obviously, $T$ is a quasinilpotent operator. Since $\{w_n\}_{n=1}^{\infty}$ is $p^{\prime}$-summable for some positive number $p^{\prime}$,
one can pick $m\geq 1$ so that the sequence $\{\prod\limits_{j=1}^{m}w_{n+j}\}_{n=0}^{\infty}$ is summable.
Set $x=\{\alpha_n\}_{n=0}^{\infty}$, where $\alpha_n=\prod\limits_{j=1}^{m}w_{n+j}$.
Then $x\in l^p(\mathbb{N})$ for any $p\in [1,+\infty)$.
We can first observe that
$$\|(z-T)^{-1}x\|_p\geq \big|f_0\big((z-T)^{-1}x\big)\big|,\quad \forall z\neq 0,$$
and
\begin{align*}
k_{x}
=\limsup_{z\rightarrow 0}\frac{\ln\|(z-T)^{-1}x\|_p}{\ln\|(z-T)^{-1}\|}
\geq\limsup_{z\rightarrow 0}\frac{\ln\big|f_0\big((z-T)^{-1}x\big)\big|}{\ln\|(z-T)^{-1}\|}.
\end{align*}
Next, we compute the value of $f_0\big((z-T)^{-1}x\big)$.
For $z\neq 0$,
\begin{equation}\label{B:f0}
\begin{aligned}
f_0\big((z-T)^{-1}x\big)
&=\sum\limits_{n=0}^{\infty}\frac{f_0(T^n x)}{z^{n+1}}
=\frac{\alpha_0}{z}+\sum\limits_{n=1}^{\infty}\frac{\prod\limits_{j=1}^{n}w_j}{z^{n+1}}\alpha_n\\
&=\frac{\alpha_0}{z}+\sum\limits_{n=1}^{\infty}\frac{\prod\limits_{j=1}^{n+m}w_{j}}{z^{n+1}}
=\frac{\alpha_0}{z}+z^m\sum\limits_{n=1}^{\infty}\frac{\prod\limits_{j=1}^{n+m}w_{j}}{z^{n+m+1}}\\
&=\frac{\alpha_0}{z}+z^m\sum\limits_{k=m+1}^{\infty}\frac{\prod\limits_{j=1}^{k}w_{j}}{z^{k+1}}=\frac{\alpha_0}{z}
+z^m\big(\sum\limits_{k=1}^{\infty}\frac{\prod\limits_{j=1}^{k}w_{j}}{z^{k+1}}
-\sum\limits_{k=1}^{m}\frac{\prod\limits_{j=1}^{k}w_{j}}{z^{k+1}}\big).
\end{aligned}
\end{equation}
Furthermore, since $\{w_n\}_{n=1}^{\infty}$ is decreasing, it follows that $\|T^k\|=\prod\limits_{j=1}^{k}w_{j}$ and
\begin{equation}\label{B:normT}
\|(z-T)^{-1}\|\leq\frac{1}{|z|}+\sum\limits_{k=1}^{\infty}\frac{\|T^k\|}{|z|^{k+1}}
=\frac{1}{|z|}+\sum\limits_{k=1}^{\infty}\frac{\prod\limits_{j=1}^{k}w_{j}}{|z|^{k+1}}.
\end{equation}
Therefore, combining (\ref{B:f0}) and (\ref{B:normT}), we can deduce that
\begin{equation}\label{B:kx=1}
\begin{aligned}
k_{x}
&\geq
\limsup_{z\rightarrow 0}\frac{\ln |f_0\big((z-T)^{-1}x\big)|}{\ln\|(z-T)^{-1}\|}\\
&\geq\limsup_{z\rightarrow 0}\frac{\ln \big|\frac{\alpha_0}{z}
+z^m\big(\sum\limits_{k=1}^{\infty}\frac{\prod\limits_{j=1}^{k}w_{j}}{z^{k+1}}
-\sum\limits_{k=1}^{m}\frac{\prod\limits_{j=1}^{k}w_{j}}{z^{k+1}}\big)\big|}
{\ln\Big(\frac{1}{|z|}+\sum\limits_{k=1}^{\infty}\frac{\prod\limits_{j=1}^{k}w_{j}}{|z|^{k+1}}\Big)}\\
&\geq\limsup_{|z|\rightarrow 0}\frac{\ln\Big(\frac{\alpha_0}{|z|}
+|z|^m\big(\sum\limits_{k=1}^{\infty}\frac{\prod\limits_{j=1}^{k}w_{j}}{|z|^{k+1}}
-\sum\limits_{k=1}^{m}\frac{\prod\limits_{j=1}^{k}w_{j}}{|z|^{k+1}}\big)\Big)}
{\ln\Big(\frac{1}{|z|}+\sum\limits_{k=1}^{\infty}\frac{\prod\limits_{j=1}^{k}w_{j}}{|z|^{k+1}}\Big)}
=1.
\end{aligned}
\end{equation}
By the fact $0 \leq k_{x}\leq1$, we have $k_{x}=1$. Hence $1\in \Lambda(T)$.
\end{proof}

\begin{rem}
By Theorem \ref{B:contain1}, one can see that the power set of a backward weighted shift can also contain $1$.
Therefore, it will be very interesting to investigate its power set.
Next, we will show that the power set of a class of backward weighted shifts is $[0,1]$.
\end{rem}

\begin{lem}\label{B:0to1}
Let $T$ be a quasinilpotent backward unilateral weighted shift on $\ell^p(\mathbb{N})$ with weight sequence $\{w_n\}_{n=1}^\infty$.
If there is a natural number $m$ such that
$x=\{\alpha_n\}_{n=0}^{\infty}$ in $\ell^p(\mathbb{N})$, where $\alpha_0=1$ and $\alpha_n=\big((n+m)!\prod\limits_{j=1}^{n}w_j\big)^{-1}$ for $n\geq1$,
then $[0,k_x]\subset\Lambda(T)$.
\end{lem}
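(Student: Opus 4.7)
The plan is to realise every $\tau \in [0, k_x]$ as $k_y$ for some non-zero $y \in l^p(\mathbb{N})$. The endpoint $\tau = 0$ is immediate from any basis vector, since $k_{e_n} = 0$ for every $n \geq 0$, and $\tau = k_x$ is realised by $x$. It remains to fill in $(0, k_x)$, for which I would introduce the one-parameter family
\[
y_s := \sum_{n=0}^{\infty} \alpha_n s^n e_n, \qquad s \in (0, 1],
\]
so that $y_1 = x$ and $y_s$ depends continuously on $s$ in $l^p(\mathbb{N})$.

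The crucial computation is to verify that $k_{y_s} = s \cdot k_x$ for every $s \in (0, 1]$. Using the resolvent series together with the defining identity $\alpha_k \prod_{j=1}^{k} w_j = 1/(k+m)!$ for $k \geq 1$, the leading coordinate evaluates as
\[
f_0\bigl((z-T)^{-1} y_s\bigr) = \frac{1}{z} + \sum_{k \geq 1} \frac{s^k}{(k+m)!\, z^{k+1}},
\]
and recognising the Taylor tail of $e^{s/z}$ one obtains, for real $r > 0$ sufficiently small,
\[
\bigl|f_0\bigl((r-T)^{-1} y_s\bigr)\bigr| \asymp \frac{r^{m-1}}{s^m}\, e^{s/r}.
\]
Since $\|(r-T)^{-1} y_s\|_p \geq |f_0((r-T)^{-1} y_s)|$, this yields the lower bound $\ln\|(r-T)^{-1} y_s\|_p \geq s/r + O(\ln r)$. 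The matching upper bound is expected to come from an analogous computation of every coordinate $f_k((r-T)^{-1} y_s)$ for $k \geq 1$, showing that the $l^p$-sum $\sum_k |f_k|^p$ is dominated by its $k = 0$ term. Specialising to $s = 1$ gives $\ln \|(r-T)^{-1} x\|_p \sim 1/r$, so
\[
\lim_{r \to 0^+} \frac{\ln \|(r-T)^{-1} y_s\|_p}{\ln \|(r-T)^{-1} x\|_p} = s,
\]
and consequently
\[
k_{y_s} = \limsup_{r \to 0^+} \frac{\ln \|(r-T)^{-1} y_s\|_p}{\ln \|(r-T)^{-1}\|} = s \cdot k_x.
\]
Letting $s$ range over $(0, 1]$ makes $k_{y_s}$ range over $(0, k_x]$, which together with $0 \in \Lambda(T)$ gives the desired inclusion $[0, k_x] \subset \Lambda(T)$.

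The main obstacle is the upper bound on $\|(r-T)^{-1} y_s\|_p$: one must control all coordinates $f_k$ simultaneously and show that none of the high-$k$ terms matches the $k = 0$ contribution. This is precisely where the special structure of $x$ is used, for the factorial $(k+j+m)!$ appearing in $f_k(T^j y_s)$ forces the $e^{s/z}$ factor in each coordinate to be accompanied by a damping $r^{k+m-1}$, which is small for $k \geq 1$. Carrying out this saddle-point-type estimate uniformly in $s \in (0, 1]$ and across all admissible weight sequences $\{w_j\}$ is the technical heart of the proof.
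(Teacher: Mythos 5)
Your construction is exactly the one the paper uses: the family $y_s=\sum_n \alpha_n s^n e_n$ is the paper's $x_r$, the computation of $f_0\bigl((z-T)^{-1}y_s\bigr)$ as $\tfrac1z+\tfrac{z^{m-1}}{s^m}\bigl(e^{s/z}-\sum_{n=0}^m\tfrac{s^n}{n!z^n}\bigr)$ is identical, and the conclusion $k_{y_s}=s\,k_x$ via the ratio of logarithms is the same. However, you have left the decisive step --- the upper bound $\ln\|(z-T)^{-1}y_s\|_p\lesssim \ln|f_0((|z|-T)^{-1}y_s)|$ --- as something ``expected'' and flagged it as a saddle-point-type estimate constituting ``the technical heart of the proof.'' As written this is a genuine gap: without the upper bound you only get $k_{y_s}\geq s\,k_x$ from the $f_0$ coordinate, and the claim $k_{y_s}=s\,k_x$ (hence that the values $s\,k_x$ are actually attained, which is the whole content of the lemma) is unproven.

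The good news is that the missing step is elementary, not a saddle-point estimate, and it is exactly here that the specific form of $\alpha_n$ is used. Writing out the $n$-th coordinate, the weights $\prod_{j=1}^n w_j$ cancel against $\alpha_n$ and one gets
\begin{equation*}
f_n\bigl((z-T)^{-1}y_s\bigr)=\alpha_n s^n\cdot\frac1z\Bigl(1+\frac{1}{n+m+1}\frac{s}{z}+\frac{1}{(n+m+1)(n+m+2)}\frac{s^2}{z^2}+\cdots\Bigr).
\end{equation*}
Since $\frac{1}{(n+m+1)\cdots(n+m+j)}\leq\frac{1}{(m+1)\cdots(m+j)}$ for every $n\geq 1$, a term-by-term comparison gives $|f_n((z-T)^{-1}y_s)|\leq \alpha_n s^n\,\bigl|f_0\bigl((|z|-T)^{-1}y_s\bigr)\bigr|$, and hence $\|(z-T)^{-1}y_s\|_p\leq\|y_s\|_p\,\bigl|f_0\bigl((|z|-T)^{-1}y_s\bigr)\bigr|$; the $p$-summability of $\{\alpha_n\}$ (the hypothesis $x\in l^p(\mathbb{N})$) is what makes the constant finite, and a constant factor is harmless after taking logarithms. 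Note also that this two-sided bound, together with the circular symmetry $\|(z-T)^{-1}\|=\|(|z|-T)^{-1}\|$, is what lets you replace the $\limsup$ over complex $z\to 0$ by the limit along positive real $z$; your argument silently restricts to real $r>0$ and this substitution needs that justification.
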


\begin{proof}
For $0<r\leq 1$, let $x_r=\big\{a_n\big\}_{n=0}^{\infty}$, where $a_0=1$ and $a_n=r^n\big((n+m)!\prod\limits_{j=1}^{n}w_j\big)^{-1}$ for $n\geq1$.
Then $x_1=x$ and $x_r\in l^p(\mathbb{N})$.
For $z\neq 0$,
\begin{equation}\label{fxr}
\begin{aligned}
f_0\big((|z|-T)^{-1}x_r\big)
&=\sum\limits_{k=0}^{\infty}\frac{f_0(T^k x_r)}{|z|^{k+1}}
=\frac{1}{|z|}\Big(1+\sum\limits_{k=1}^{\infty}\frac{1}{(k+m)!}\frac{r^k}{|z|^k}\Big)\\
&=\frac{1}{|z|}\Big(1+\frac{|z|^{m}}{r^{m}}\sum\limits_{k=1}^{\infty}\frac{1}{(k+m)!}\frac{r^{k+m}}{|z|^{k+m}}\Big)\\
&=\frac{1}{|z|}+\frac{|z|^{m-1}}{r^{m}}\sum\limits_{n=m+1}^{\infty}\frac{1}{n!}\frac{r^n}{|z|^n}\\
&=\frac{1}{|z|}+\frac{|z|^{m-1}}{r^{m}}\Big(\sum\limits_{n=1}^{\infty}\frac{1}{n!}\frac{r^n}{|z|^n}
-\sum\limits_{n=1}^{m}\frac{1}{n!}\frac{r^n}{|z|^n}\Big)\\
&=\frac{1}{|z|}+\frac{|z|^{m-1}}{r^{m}}\Big(e^{\frac{r}{|z|}}-\sum\limits_{n=0}^{m}\frac{1}{n!}\frac{r^n}{|z|^n}\Big).
\end{aligned}
\end{equation}
It follows that
\begin{equation}\label{fr}
\begin{aligned}
\lim_{z\rightarrow 0}\frac{\ln\big|f_0\big((|z|-T)^{-1}x_r\big)\big|}{\ln\big|f_0\big((|z|-T)^{-1}x\big)\big|}=\lim_{|z|\rightarrow 0}\frac{\ln\Big(\frac{1}{|z|}+\frac{|z|^{m-1}}{r^{m}}\big(e^{\frac{r}{|z|}}-\sum\limits_{n=0}^{m}\frac{1}{n!}\frac{r^n}{|z|^n}\big)\Big)}
{\ln\Big(\frac{1}{|z|}+|z|^{m-1}\big(e^{\frac{1}{|z|}}-\sum\limits_{n=0}^{m}\frac{1}{n!}\frac{1}{|z|^n}\big)\Big)}=r.
\end{aligned}
\end{equation}
Moreover, for any $ n\geq 1$ we have
\begin{align*}
f_n\big((z-T&)^{-1}x_r\big)
=\sum\limits_{k=0}^{\infty}\frac{f_n(T^k x_r)}{z^{k+1}}\\
&=\frac{1}{(n+m)!\prod\limits_{j=1}^n w_j}\frac{r^n}{z}+\frac{w_{n+1}}{(n+m+1)!\prod\limits_{j=1}^{n+1} w_j}\frac{r^{n+1}}{z^2}
+\frac{w_{n+1}w_{n+2}}{(n+m+2)!\prod\limits_{j=1}^{n+2} w_j}\frac{r^{n+2}}{z^3}+\cdots\\
&=\frac{1}{(n+m)!\prod\limits_{j=1}^n w_j}\frac{r^n}{z}\Big(1+\frac{1}{n+m+1}\frac{r}{z}+\frac{1}{(n+m+1)(n+m+2)}\frac{r^2}{z^2}+\cdots\Big)
\end{align*}
and
\begin{align*}
|f_n\big((z-T)^{-1}x_r\big)|
&\leq \frac{1}{(n+m)!\prod\limits_{j=1}^n w_j}\frac{r^n}{|z|}\Big(1+\frac{1}{n+m+1}\frac{r}{|z|}+\frac{1}{(n+m+1)(n+m+2)}\frac{r^2}{|z|^2}+\cdots\Big)\\
&\leq\frac{1}{(n+m)!\prod\limits_{j=1}^n w_j}\frac{r^n}{|z|}\Big(1+\frac{1}{(m+1)}\frac{r}{|z|}+\frac{1}{(m+1)(m+2)}\frac{r^2}{|z|^2}+\cdots\Big)\\
&=\frac{r^n}{(n+m)!\prod\limits_{j=1}^n w_j}|f_0\big((|z|-T)^{-1}x_r\big)|.
\end{align*}
It follows that
\begin{equation}\label{xrn}
\begin{aligned}
\|(z-T)^{-1}x_r\|_p
&=\Big(\sum\limits_{n=0}^{\infty}\big|f_n\big((z-T)^{-1}x_r\big)\big|^p\Big)^\frac{1}{p}\\
&\leq \big|f_0\big((|z|-T)^{-1}x_r\big)\big|\bigg(1+\sum\limits_{n=1}^{\infty}\Big(\frac{r^n}{(n+m)!\prod\limits_{j=1}^n w_j}\Big)^p\bigg)^\frac{1}{p}\\
&=\big|f_0\big((|z|-T)^{-1}x_r\big)\big|\|x_r\|_p.
\end{aligned}
\end{equation}
Moreover, it is obvious that $\|(z-T)^{-1}x_r\|_p\geq \big|f_0\big((z-T)^{-1}x_r\big)\big|$, $\forall z\neq 0$. So
\begin{align*}
\frac{\ln\big|f_0\big((|z|-T)^{-1}x_r\big)\big|}{\ln\|(z-T)^{-1}\|}
&\leq\frac{\ln\|(z-T)^{-1}x_r\|_p}{\ln\|(z-T)^{-1}\|}
\leq \frac{\ln\big|f_0\big((|z|-T)^{-1}x_r\big)\big|\|x_r\|_p}{\ln\|(z-T)^{-1}\|}.
\end{align*}
On the other hand, since $T$ have circular symmetry, we have
$$\|(z-T)^{-1}\|=\|(|z|-T)^{-1}\|,~~ \forall z\neq 0.$$
Then, we can conclude that
\begin{align}\label{r01}
k_{x_r}=\limsup\limits_{|z|\rightarrow 0}\frac{\ln\big|f_0\big((|z|-T)^{-1}x_r\big)\big|}{\ln\|(|z|-T)^{-1}\|},~~ \forall r\in (0,1].
\end{align}
Combining $(\ref{fr})$ and $(\ref{r01})$, we can deduce that
\begin{align*}
k_{x_r}
&=\limsup_{z\rightarrow 0}\frac{\ln\|(z-T)^{-1}x_r\|_p}{\ln\|(z-T)^{-1}\|}\\
&=\lim_{|z|\rightarrow0}\frac{\ln\big|f_0\big((|z|-T)^{-1}x_r\big)\big|}{\ln\big|f_0\big((|z|-T)^{-1}x\big)\big|}
\limsup_{|z|\rightarrow0}\frac{\ln\big|f_0\big((|z|-T)^{-1}x\big)\big|}{\ln\|(|z|-T)^{-1}\|}\\
&=r\cdot\limsup_{|z|\rightarrow 0}\frac{\ln\big|f_0\big((|z|-T)^{-1}x\big)\big|}{\ln\|(|z|-T)^{-1}\|}=rk_x.
\end{align*}
Since $0< r\leq1$ and $0\in \Lambda(T)$,  it holds that $[0,k_x]\subset\Lambda(T)$.
\end{proof}

\begin{thm}\label{B:pw0t1}
Let $T$ be a backward unilateral weighted shift on $\ell^p(\mathbb{N})$ with weight sequence $\{w_n\}_{n=1}^\infty$.
If there is a positive number $m_0$ such that $\frac{1}{n+2m_0}\leq w_{n+m_0}\leq\frac{1}{n}$ for $n\geq1$, then $\Lambda(T)=[0,1]$.
 \end{thm}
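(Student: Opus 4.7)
The plan is to combine Lemma \ref{B:0to1} with an asymptotic computation. Since $\Lambda(T)\subset[0,1]$ is automatic, it suffices to exhibit a single vector $x\in l^p(\mathbb{N})$ of the special form required by Lemma \ref{B:0to1}---namely $\alpha_0=1$ and $\alpha_n=((n+m)!\prod_{j=1}^{n}w_j)^{-1}$ for some $m$---with $k_x=1$: the lemma then yields $[0,1]=[0,k_x]\subset\Lambda(T)$, which is the theorem.

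First I would verify the hypotheses of Lemma \ref{B:0to1}. The two-sided weight bound combined with the decreasing assumption forces every $w_j>0$, so $T$ is injective; setting $C_0=\prod_{j=1}^{m_0}w_j$ and $n>m_0$, the bounds translate into
\[\frac{C_0(2m_0)!}{(n+m_0)!}\le\prod_{j=1}^{n}w_j\le\frac{C_0}{(n-m_0)!}.\]
The upper estimate makes $\|T^n\|^{1/n}\to 0$, so $T$ is quasinilpotent, and it also makes $\{w_n\}$ a $p'$-summable sequence for every $p'>1$, so Theorem \ref{B:contain1} is applicable and already gives $1\in\Lambda(T)$ via a different vector. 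I then choose the parameter $m$ in Lemma \ref{B:0to1} to be $m\ge m_0+2$; the lower bound above yields $\alpha_n=O(n^{-(m-m_0)})$, so $x=\{\alpha_n\}\in l^p(\mathbb{N})$ for every $p\in[1,\infty)$.

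The heart of the argument is to show $k_x=1$ by comparing asymptotics along the positive real axis $z=t\to 0^+$. Specializing formula (\ref{fxr}) from the proof of Lemma \ref{B:0to1} with the auxiliary parameter set to $1$ (so $x_r=x$) gives
\[f_0\bigl((t-T)^{-1}x\bigr)=\frac{1}{t}+t^{m-1}\Bigl(e^{1/t}-\sum_{n=0}^{m}\frac{1}{n!\,t^n}\Bigr),\]
which is positive for small $t$ and behaves like $t^{m-1}e^{1/t}$, so $\ln f_0((t-T)^{-1}x)=1/t+O(\ln t)$. For the denominator I would use the Neumann series together with $\prod_{j=1}^{k}w_j\le C_0/(k-m_0)!$ to obtain
\[\|(t-T)^{-1}\|\le\frac{1}{t}+\sum_{k=1}^{m_0}\frac{\prod_{j=1}^{k}w_j}{t^{k+1}}+\frac{C_0}{t^{m_0+1}}\bigl(e^{1/t}-1\bigr),\]
whence $\ln\|(t-T)^{-1}\|=1/t+O(\ln(1/t))$. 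Using $\|(t-T)^{-1}x\|_p\ge f_0((t-T)^{-1}x)$ together with the circular symmetry $\|(z-T)^{-1}\|=\|(|z|-T)^{-1}\|$, the ratio in the definition of $k_x$ is bounded below by a quantity tending to $1$; combined with $k_x\le 1$ this forces $k_x=1$, and Lemma \ref{B:0to1} concludes the proof.

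The main obstacle I anticipate is the tightness of the two asymptotic estimates. A priori $\|(t-T)^{-1}\|$ could grow faster than $e^{1/t}$ up to polynomial factors, which would make the log-ratio strictly less than $1$; the upper bound $w_{n+m_0}\le 1/n$ is exactly what forces $\|T^k\|\le C_0/(k-m_0)!$ and so pins the Neumann series down to $e^{1/t}$ times a polynomial in $1/t$. The lower bound $w_{n+m_0}\ge 1/(n+2m_0)$ plays the parallel role for the numerator, guaranteeing both that $x\in l^p$ and that $f_0((t-T)^{-1}x)$ has the matching $e^{1/t}$ growth.
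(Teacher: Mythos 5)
Your proposal is correct and follows essentially the same route as the paper: the same special vector $x$ with $\alpha_n=((n+m)!\prod_{j=1}^{n}w_j)^{-1}$, membership in $l^p$ via the lower weight bound, Lemma \ref{B:0to1} to get $[0,k_x]\subset\Lambda(T)$, and the comparison of $f_0((t-T)^{-1}x)\sim t^{m-1}e^{1/t}$ against the Neumann-series bound $\|(t-T)^{-1}\|\le \mathrm{poly}(1/t)\,e^{1/t}$ to force $k_x=1$. The only differences are cosmetic (a smaller admissible $m$, and the slightly more careful estimate $\prod_{j=1}^{k}w_j\le C_0/(k-m_0)!$ in place of the paper's $1/k!$).
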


\begin{proof}
Pick a positive number $m>2m_0+2$, then for $n>m_0$ we have
\begin{align*}
\big((n+m)!\prod\limits_{j=1}^{n}w_j\big)^{-1}
&=\big((n+m)!\prod\limits_{j=1}^{m_0}w_j\prod\limits_{j=m_0+1}^{n}w_j\big)^{-1}\\
&\leq(\prod\limits_{j=1}^{m_0}w_j)^{-1}\frac{(n+2m_0)!}{(n+m)!}
\leq(\prod\limits_{j=1}^{m_0}w_j)^{-1}\frac{1}{(n+m)(n+m+2)}.
\end{align*}
From this, we have
\begin{equation}\label{pli}
\begin{aligned}
\sum\limits_{n=1}^{\infty}\big((n+m)!\prod\limits_{j=1}^{n}w_j\big)^{-1}
\leq\sum\limits_{n=1}^{m_0}\big((n+m)!\prod\limits_{j=1}^{n}w_j\big)^{-1}
+\sum\limits_{n=m_0+1}^{\infty}\frac{(\prod\limits_{j=1}^{m_0}w_j)^{-1}}{(n+m)(n+m+2)}<\infty.
\end{aligned}
\end{equation}
Let $x=\{\alpha_n\}_{n=0}^{\infty}$, where $\alpha_0=1$ and $\alpha_n=\big((n+m)!\prod\limits_{j=1}^{n}w_j\big)^{-1}$ for $n\geq1$.
Then $x\in l^p(\mathbb{N})$ for any $p\in[1,+\infty)$.
By Lemma $\ref{B:0to1}$, have $[0,k_x]\subset\Lambda(T)$.
Next, we prove $k_x=1$.
From $w_{n+m_0}\leq\frac{1}{n}$, we can deduce that
\begin{align*}
\|(z-T)^{-1}\|
&\leq\sum\limits_{k=0}^{\infty}\frac{\|T^k\|}{|z|^{k+1}}
=\frac{1}{|z|}+\sum\limits_{k=1}^{\infty}\frac{\prod\limits_{j=1}^k w_j}{|z|^{k+1}}\\
&=\frac{1}{|z|}+\sum\limits_{k=1}^{m_0}\frac{\prod\limits_{j=1}^k w_j}{|z|^{k+1}}+\sum\limits_{k=m_0+1}^{\infty}\frac{\prod\limits_{j=1}^k w_j}{|z|^{k+1}}\\
&\leq\frac{1}{|z|}+\sum\limits_{k=1}^{m_0}\frac{\prod\limits_{j=1}^k w_j}{|z|^{k+1}}+\sum\limits_{k=m_0+1}^{\infty}\frac{1}{k!|z|^{k+1}}\\
&=\frac{1}{|z|}+\sum\limits_{k=1}^{m_0}\frac{\prod\limits_{j=1}^k w_j}{|z|^{k+1}}+\frac{1}{|z|}\big(\sum\limits_{k=1}^{\infty}\frac{1}{k!|z|^{k}}
-\sum\limits_{k=1}^{m_0}\frac{1}{k!|z|^{k}}\big)\\
&=\frac{1}{|z|}+\sum\limits_{k=1}^{m_0}\frac{\prod\limits_{j=1}^k w_j}{|z|^{k+1}}+\frac{1}{|z|}\big(e^{\frac{1}{|z|}}-\sum\limits_{k=0}^{m_0}\frac{1}{k!|z|^{k}}\big).
\end{align*}
Combining (\ref{fxr}), we can obtain that
\begin{align*}
k_{x}
&=\limsup_{z\rightarrow 0}\frac{\ln\|(z-T)^{-1}x\|_p}{\ln\|(z-T)^{-1}\|}\\
&\geq\limsup_{z\rightarrow 0}\frac{\ln |f_0\big((z-T)^{-1}x\big)|}{\ln\|(z-T)^{-1}\|}\\
&\geq\limsup_{|z|\rightarrow 0}
\frac{\ln\Big(\frac{1}{|z|}+|z|^{m-1}\big(e^{\frac{1}{|z|}}-\sum\limits_{n=0}^{m}\frac{1}{n!}\frac{1}{|z|^n}\big)\Big)}
{\ln \Big(\frac{1}{|z|}+\sum\limits_{k=1}^{m_0}\frac{\prod\limits_{j=1}^k w_j}{|z|^{k+1}}+\frac{1}{|z|}\big(e^{\frac{1}{|z|}}-\sum\limits_{k=0}^{m_0}\frac{1}{k!|z|^{k}}\big)\Big)}=1.
\end{align*}
Since $0\leq k_x \leq 1$, it yields that $k_x=1$. Hence $\Lambda(T)=[0,1]$.
\end{proof}

\begin{cor}\label{B:buwn1}
Let $T$ be the backward unilateral weighted shift with weight sequence $\{\frac{1}{n+1}\}_{n=0}^\infty$ on $\ell^p(\mathbb{N})$ for $1\leq p<\infty$. Then $\Lambda(T)=[0,1]$.
\end{cor}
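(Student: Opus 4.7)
The plan is to derive this corollary as an immediate specialization of Theorem~\ref{B:pw0t1}, so the whole proof reduces to checking its hypotheses.

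First, the weight sequence $\{\tfrac{1}{n+1}\}$ is strictly positive and strictly decreasing to $0$, so $T$ is injective and has circular symmetry. Using the decreasing-weights identity $\|T^{k}\| = \prod_{j=1}^{k} w_{j}$ (the same formula invoked inside the proof of Theorem~\ref{B:pw0t1}), we get $\|T^{k}\|$ comparable to $1/k!$, whence $\|T^{k}\|^{1/k}\to 0$ and $T$ is quasinilpotent. Thus $T$ is a legitimate instance of the class covered by Theorem~\ref{B:pw0t1}.

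Second, I exhibit the integer $m_{0}$ demanded by Theorem~\ref{B:pw0t1}. Take $m_{0} = 1$; the required two-sided bound $\tfrac{1}{n+2m_{0}} \le w_{n+m_{0}} \le \tfrac{1}{n}$ then reads $\tfrac{1}{n+2} \le w_{n+1} \le \tfrac{1}{n}$, which, up to the indexing convention for the weight sequence, is an equality on the left and an obvious strict inequality on the right for every $n \ge 1$. Theorem~\ref{B:pw0t1} applies directly and gives $\Lambda(T) = [0,1]$.

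I do not anticipate any real obstacle, since all of the analytic work — constructing the specific test vector $x$ used to force $k_{x} = 1$, and the family $x_{r}$ used to fill out $[0,k_{x}]$ via Lemma~\ref{B:0to1} — has already been carried out in the preceding results. The corollary is stated separately because $w_{n} = \tfrac{1}{n+1}$ is a distinguished example: it is the adjoint of the Volterra-type forward weighted shift whose power set equals $\{1\}$, and, combined with the cited fact that this backward shift has only countably many invariant subspaces, it supplies the negative answer to Question~\ref{Q:oriso} announced in the introduction.
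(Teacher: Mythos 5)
Your proposal is correct and takes the same route as the paper, whose entire proof is the single line ``It immediately follows from Theorem~\ref{B:pw0t1}.'' Your explicit verification that $m_0=1$ yields $\tfrac{1}{n+2}\le w_{n+1}\le\tfrac{1}{n}$ for all $n\ge 1$, together with the injectivity and quasinilpotence checks, is precisely the routine hypothesis-checking the paper leaves implicit.
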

\begin{proof}
It follows immediately from Theorem $\ref{B:pw0t1}$.
\end{proof}

%%% ----------------------------------------------------------------------

\section{A bilateral weighted shift with power set $[\frac{1}{2},1]$}

For $1<p<\infty$, let $A$ be an injective forward unilateral weighted shift on $\ell^p(\mathbb{N})$ with weight sequence $\{w_n\}_{n=1}^{\infty}$.
If $\{w_n\}_{n=1}^{\infty}$ is monotone decreasing and $p^{\prime}$-summable for some positive number $p^{\prime}$, then $A$ is strictly cyclic (see {\cite[Corollary 3.5]{KL73}}).
So, by Corollary $\ref{F:sc1}$, we have $\Lambda(A)=\{1\}$.
From this, the aim of this section is to construct a bilateral weighted shift with power set $[\frac{1}{2},1]$ on $\ell^p(\mathbb{Z})$ for $1<p<\infty$.

For $k\in \mathbb{Z}$, set $f_k(x)=x_k$, $\forall x=\{x_n\}_{n=-\infty}^{+\infty}\in l^p(\mathbb{Z})$.
Let $e_1\otimes f_{0}$ denote the rank-one operator on $\ell^p(\mathbb{Z})$ defined as $(e_1\otimes f_{0})(x)=f_{0}(x)e_1, \forall x\in l^p(\mathbb{Z})$.
Now, we defined a bilateral weighted shift on $\ell^p(\mathbb{Z})$ as follows:
\begin{equation}\label{bmT}
T=\begin{bmatrix}
B&0\\
e_1\otimes f_{0}&A
\end{bmatrix}
\end{equation}
where
\[
A=
\begin{bmatrix}
  0  \\
  w_1&0\\
  &w_2&0\\
  &&w_3&0\\
  &&&\ddots&\ddots
\end{bmatrix}
\begin{matrix}
e_1\\
e_2\\
e_3\\
e_4\\
\vdots
\end{matrix}, \ \ \ \
B=
\begin{bmatrix}
  0&w_1 \\
  &0&w_2\\
  &&0&w_3\\
  &&&0&w_4\\
  &&&&\ddots&\ddots
\end{bmatrix}
\begin{matrix}
e_0\\
e_{-1}\\
e_{-2}\\
e_{-3}\\
\vdots
\end{matrix}
.\]

Note that $A=B^{tr}$, where $B^{tr}$ is the transpose of $B$. And it is not difficult to prove that $\|(z-A)^{-1}\|=\|(z-B)^{-1}\|$ for $z\neq 0$.
Then we have the following lemmas.

\begin{lem}\label{B:AT}
Let $T$ be as in $(\ref{bmT})$ above. If $\{w_n\}_{n=1}^{\infty}$ is monotone decreasing and $p^{\prime}$-summable for some positive number $p^{\prime}$, then $\Lambda(T)\subset[\frac{1}{2},1]$.
\end{lem}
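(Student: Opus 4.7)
Set $M(z):=\|(z-A)^{-1}\|=\|(z-B)^{-1}\|$ for $z\neq 0$ (the second equality is given just before the statement). The plan is to prove $k_x\geq\tfrac{1}{2}$ for every nonzero $x\in l^p(\mathbb{Z})$, and the key observation will be that the rank-one coupling $e_1\otimes f_0$ inflates $\|(z-T)^{-1}\|$ from roughly $M(z)$ up to roughly $M(z)^2$. Indeed, solving the $2\times 2$ block system yields
\[(z-T)^{-1}=\begin{bmatrix}(z-B)^{-1}&0\\(z-A)^{-1}(e_1\otimes f_0)(z-B)^{-1}&(z-A)^{-1}\end{bmatrix},\]
so $\|(z-T)^{-1}\|\leq 2M(z)+M(z)^2$ and therefore $\ln\|(z-T)^{-1}\|\leq 2\ln M(z)\cdot(1+o(1))$ as $z\to 0$. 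This doubling of the denominator in the definition of $k_x$ is the source of the factor $\tfrac{1}{2}$.

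Next I would reduce the computation of $k_x$ to that of $k_{e_{n_0}}$ for basis vectors, adapting the averaging argument from the proof of Lemma \ref{kx>ke0} to the bilateral setting. Given a nonzero $x=\sum_n\hat{x}_n e_n$, set $n_0:=\min\{n\in\mathbb{Z}:\hat{x}_n\neq 0\}$, and let $\gamma_{n,k}$ denote the product of $k$ consecutive weights of $T$ starting at position $n$, so that the $m$-th coordinate of $(z-T)^{-1}x$ is $a_m(z)=\sum_{n=n_0}^m\hat{x}_n\gamma_{n,m-n}z^{-(m-n)-1}$ for $m\geq n_0$. Substituting $z=re^{i\theta}$, multiplying $a_m(re^{i\theta})$ by the phase factor $e^{i(m-n_0+1)\theta}$, and integrating over $\theta$ kills every term except the $n=n_0$ one, yielding
\[\tfrac{1}{2\pi}\int_0^{2\pi}|a_m(re^{i\theta})|\,d\theta\geq|\hat{x}_{n_0}|\gamma_{n_0,m-n_0}r^{-(m-n_0)-1}=|\hat{x}_{n_0}|\bigl[(r-T)^{-1}e_{n_0}\bigr]_m.\]
Combining with H\"older duality (the extremizer against the positive vector $(r-T)^{-1}e_{n_0}$ is itself positive) then gives $\sup_{\theta}\|(re^{i\theta}-T)^{-1}x\|_p\geq|\hat{x}_{n_0}|\,\|(r-T)^{-1}e_{n_0}\|_p$. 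Picking $\theta_r$ achieving the supremum and using circular symmetry $\|(re^{i\theta_r}-T)^{-1}\|=\|(r-T)^{-1}\|$, taking $\limsup$ as $r\to 0^+$ delivers $k_x\geq k_{e_{n_0}}$.

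Finally, I would verify $k_{e_n}\geq\tfrac{1}{2}$ for every $n\in\mathbb{Z}$ in three sub-cases. For $n\geq 1$ the vector $e_n$ lies in the $A$-summand, so $(z-T)^{-1}e_n=(z-A)^{-1}e_n$; since the hypothesis makes $A$ strictly cyclic at $e_1$ by \cite[Corollary~3.5]{KL73}, Corollary \ref{F:sc1} yields $\Lambda(A)=\{1\}$, and dividing $\ln\|(z-A)^{-1}e_n\|_p$ by the upper bound $\ln\|(z-T)^{-1}\|\leq 2\ln M(z)(1+o(1))$ gives $k_{e_n}^T\geq\tfrac{1}{2}$. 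For $n=0$ the block formula reads $(z-T)^{-1}e_0=z^{-1}e_0+z^{-1}(z-A)^{-1}e_1$, and the strict cyclicity estimate $\|(z-A)^{-1}e_1\|_p\geq cM(z)$ yields $\|(z-T)^{-1}e_0\|_p\geq c|z|^{-1}M(z)\geq cM(z)$ for small $|z|$. For $n=-m\leq-1$ a direct computation gives $f_0\bigl((z-B)^{-1}e_{-m}\bigr)=z^{-(m+1)}\beta_m$, so the $A$-component of $(z-T)^{-1}e_{-m}$ equals $z^{-(m+1)}\beta_m(z-A)^{-1}e_1$ and $\|(z-T)^{-1}e_{-m}\|_p\geq|z|^{-(m+1)}\beta_m\cdot cM(z)\geq c_mM(z)$ for all sufficiently small $|z|$. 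In each case the ratio to $2\ln M(z)$ limsups to at least $\tfrac{1}{2}$. The main obstacle lies in the second paragraph: transporting the one-sided averaging device of Lemma \ref{kx>ke0} to the bilateral indexing requires careful bookkeeping of the phase multiplier $e^{i(m-n_0+1)\theta}$ (now $n_0$ may be negative) and a verification that the H\"older extremizer against $(r-T)^{-1}e_{n_0}$ remains nonnegative; the case analysis in the third paragraph is then a routine consequence of the block structure combined with the strict cyclicity of $A$ at $e_1$.
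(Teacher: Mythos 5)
Your overall architecture is sound, and your first and third paragraphs are essentially correct: the upper bound $\|(z-T)^{-1}\|\le 2M(z)+M(z)^2$, hence $\ln\|(z-T)^{-1}\|\le(1+o(1))\,2\ln M(z)$ as $z\to0$, is exactly the source of the $\tfrac12$, and the verification that $k_{e_n}\ge\tfrac12$ for each $n\in\mathbb{Z}$ (via strict cyclicity of $A$ at $e_1$ and the identity $f_0\bigl((z-B)^{-1}e_{-m}\bigr)=z^{-(m+1)}w_1\cdots w_m$) goes through. The genuine gap is in your second paragraph: for $x\in l^p(\mathbb{Z})$ the index set is $\mathbb{Z}$, so $n_0:=\min\{n:\hat{x}_n\neq 0\}$ need not exist --- a vector may have nonzero coordinates at arbitrarily negative indices. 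For such $x$ the formula $a_m(z)=\sum_{n=n_0}^m\cdots$ is meaningless and the reduction $k_x\ge k_{e_{n_0}}$ breaks down, so the proof as written does not cover all nonzero $x$. The difficulty you flagged (``$n_0$ may be negative'') is not the real one; the real one is that the support need not be bounded below. The gap is repairable, because the averaging device extracts \emph{any} single Fourier mode, not just the lowest: for any fixed $n_1$ with $\hat{x}_{n_1}\neq0$ one has $\frac{1}{2\pi}\bigl|\int_0^{2\pi}a_m(re^{i\theta})e^{-i(n_1-m-1)\theta}\,d\theta\bigr|=|\hat{x}_{n_1}|\,\gamma_{n_1,m-n_1}\,r^{n_1-m-1}\le\frac{1}{2\pi}\int_0^{2\pi}|a_m(re^{i\theta})|\,d\theta$, so you may run the identical argument with an arbitrary element $n_1$ of the support and conclude $k_x\ge k_{e_{n_1}}\ge\tfrac12$.

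For comparison, the paper avoids the reduction to basis vectors altogether. It splits $x$ into its $B$-block component $x_1$ and $A$-block component $x_2$ and, when $x_1\neq0$, lower-bounds $\|(z-T)^{-1}x\|_p$ by $\bigl|\bigl((z-B')^{-1}f_0\bigr)(x_1)\bigr|\,\|(z-A)^{-1}e_1\|_p$ (after absorbing the $e_1$-coefficient of $x_2$ into the scalar factor and discarding the remainder); since $\bigl((z-B')^{-1}f_0\bigr)(x_1)=\sum_k f_0(B^kx_1)z^{-k-1}$ is a nonzero, nonconstant analytic function on $\mathbb{C}\setminus\{0\}$, its modulus exceeds $1$ along some sequence $z_j\to0$, and on that sequence the quotient is at least $\frac{\ln\|(z_j-A)^{-1}e_1\|_p}{2\ln M(z_j)}\to\tfrac12$. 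That argument needs only $x_1\neq0$, with no minimal support index, which is why the unbounded-below issue never arises there. Your route, once patched as above, is a legitimate alternative --- it treats all coordinates uniformly and reuses Lemma \ref{kx>ke0} --- at the price of redoing the $L^1$-averaging estimate carefully in the bilateral setting.
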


\begin{proof}
We first compute the value of $\|(z-T)^{-1}\|$.
For $z\neq 0$,
\begin{equation}\label{norm(z-T)}
(z-T)^{-1}=
\begin{bmatrix}
  (z-B)^{-1}&0\\
  -(z-A)^{-1}e_1\otimes(z-B')^{-1}f_0&(z-A)^{-1}
\end{bmatrix},
\end{equation}
where $B'$ is the Banach conjugate of $B$.
It follows that
\begin{equation*}
\|(z-T)^{-1}\|
\leq \|(z-A)^{-1}\|+\|(z-B)^{-1}\|+\|(z-A)^{-1}e_1\|_p\|(z-B')^{-1}f_0\|_q
\end{equation*}
and
\begin{equation*}
\|(z-A)^{-1}e_1\|_p\|(z-B')^{-1}f_0\|_q\leq\|(z-T)^{-1}\|,
\end{equation*}
where $q=\frac{p}{p-1}$.
Noting that $e_1$ is a strictly cyclic vector of $A$. So, there exists a constant $c>0$ such that
$$c\|(z-A)^{-1}\|\leq\|(z-A)^{-1}e_1\|_p\leq \|(z-A)^{-1}\|.$$
From this, we have
\begin{equation}\label{T:Ae1}
k_{(A,e_1)}=\lim\limits_{z\rightarrow 0}\frac{\ln\|(z-A)^{-1}e_1\|_p}{\ln\|(z-A)^{-1}\|}=1.
\end{equation}
For convenience, given two functions $X(z), Y(z): \mathbb{C}\backslash{\{0\}}\rightarrow [0,\infty)$, we write $X(z)\approx Y(z)$ if $\lim\limits_{z\rightarrow 0}\frac{X(z)}{Y(z)}=1$.
Hence, when $z$ tends to $0$, we have that
$$\ln\|(z-A)^{-1}\|\approx\ln\|(z-A)^{-1}e_1\|_p.$$
Similarly, we can also obtain
$$\ln\|(z-A)^{-1}\|=\ln\|(z-B)^{-1}\|=\ln\|(z-B')^{-1}\|\approx\ln\|(z-B')^{-1}f_0\|_q.$$
Therefore, as $z$ tends to $0$, we have
\begin{equation}\label{ZA2}
\begin{aligned}
\ln\|(z-T)^{-1}\|&\approx\ln\|(z-A)^{-1}e_1\|_p\|(z-B')^{-1}f_0\|_q\\
&\approx\ln\|(z-A)^{-1}\|^2.
\end{aligned}
\end{equation}

Next, we prove that $k_{(T,x)}\geq \frac{1}{2}$ holds for any non-zero $x\in l^p(\mathbb{Z})$.
Let
$x=\left[\begin{array}{cc}
  x_1\\
 x_2
\end{array}\right]\in l^p(\mathbb{Z})$ and $x\neq0$. Then
\begin{equation}\label{zTx}
(z-T)^{-1}x=
\begin{bmatrix}
  (z-B)^{-1}x_1\\
  -\big((z-B')^{-1}f_0\big)(x_1)(z-A)^{-1}e_1+(z-A)^{-1}x_2
\end{bmatrix}.
\end{equation}
The rest proof is divided into three cases.

{\bf Case 1.}  $x_1=0, x_2 \neq 0$.

It is clear that $\|(z-T)^{-1}x\|_p=\|(z-A)^{-1}x_2\|_p$.
Since $k_{(A,x_2)}=1$, it follows that
\begin{align*}
k_{(T,x)}
=\limsup_{z\rightarrow 0}\frac{\ln\|(z-T)^{-1}x\|_p}{\ln\|(z-T)^{-1}\|}
&=\limsup_{z\rightarrow 0}\frac{\ln\|(z-A)^{-1}x_2\|_p}{\ln\|(z-T)^{-1}\|}\\
&\approx\limsup_{z\rightarrow 0}\frac{\ln\|(z-A)^{-1}x_2\|_p}{\ln\|(z-A)^{-1}\|^2}=\frac{1}{2}.
\end{align*}

{\bf Case 2.}  $x_1\neq0, x_2 =0$.

In this case, we can observe that $$\|(z-T)^{-1}x\|_p\geq\big|\big((z-B')^{-1}f_0\big)(x_1)\big|\|(z-A)^{-1}e_1\|_p.$$
Notice that
\begin{align*}
\big((z-B')^{-1}f_0\big)(x_1)
=f_0\big((z-B)^{-1}x_1\big)
=\sum\limits_{k=0}^{\infty}\frac{f_0(B^kx_1)}{z^{k+1}}.
\end{align*}
Since $x_1\neq0$, we have that
\begin{align*}
\limsup\limits_{z\rightarrow 0}\big|\big((z-B')^{-1}f_0\big)(x_1)\big|=+\infty.
\end{align*}
We can pick a sequence $\{z_j\}_{j=0}^{\infty}\subset\mathbb{C}$ with $\lim\limits_{j\rightarrow\infty}|z_j|=0$ such that
\begin{equation}\label{BT:geq1}
  |\big((z_j-B')^{-1}f_0\big)(x_1)|>1,\quad \forall j\geq 0.
  \end{equation}
  Combining (\ref{T:Ae1}) and (\ref{BT:geq1}), we can deduce that
  \begin{align*}
  k_{(T,x)}
   &=\limsup_{z\rightarrow 0}\frac{\ln\|(z-T)^{-1}x\|_p}{\ln\|(z-T)^{-1}\|}\\
   &\approx\limsup_{z\rightarrow 0}\frac{\ln\|(z-T)^{-1}x\|_p}{\ln\|(z-A)^{-1}\|^2}\\
   &\geq\limsup_{z\rightarrow 0}\frac{\ln\big|\big((z-B')^{-1}f_0\big)(x_1)\big|+\ln\|(z-A)^{-1}e_1\|_p}{\ln\|(z-A)^{-1}\|^2}\\
   &\geq \limsup_{j\rightarrow \infty}\frac{\ln\big|\big((z_j-B')^{-1}f_0\big)(x_1)\big|+\ln\|(z_j-A)^{-1}e_1\|_p}{\ln\|(z_j-A)^{-1}\|^2}
   \geq\frac{1}{2}.
   \end{align*}

{\bf Case 3.} $x_i\neq0, i=1,2$.

Let $x_2=\sum\limits_{n=1}^{\infty}\alpha_n e_n$
and $\widetilde{x}_2=\sum\limits_{n=2}^{\infty}\alpha_n e_n$.
We have
  \begin{align*}
  &-\big((z-B')^{-1}f_0\big)(x_1)(z-A)^{-1}e_1+(z-A)^{-1}x_2\\
  &=\big(-\big((z-B')^{-1}f_0\big)(x_1)+\alpha_1\big)(z-A)^{-1}e_1+(z-A)^{-1}\widetilde{x}_2
  \end{align*}
  and
\begin{align*}
  \|(z-T)^{-1}x\|_p&\geq\big\|\big(\alpha_1-\big((z-B')^{-1}f_0\big)(x_1)\big)(z-A)^{-1}e_1+(z-A)^{-1}\widetilde{x}_2\big\|_p\\
  &\geq\big|\big(\alpha_1-\big((z-B')^{-1}f_0\big)(x_1)\big)\big|\|(z-A)^{-1}e_1\|_p
  -\|(z-A)^{-1}\widetilde{x}_2\|_p.
\end{align*}
  Notice that
  \begin{align*}
  \limsup\limits_{z\rightarrow 0}\big|\big(\alpha_1-\big((z-B')^{-1}f_0\big)(x_1)\big)\big|
  =+\infty.
  \end{align*}
  We can pick a sequence $\{\lambda_j\}_{j=0}^{\infty}\subset\mathbb{C}$ with $\lim\limits_{j\rightarrow\infty}|\lambda_j|=0$ such that
  \begin{equation}\label{BT:Geq1}
  \big|\big(\alpha_1-((\lambda_j-B')^{-1}f_0)(x_1)\big)\big|>1,\quad \forall j\geq 0.
  \end{equation}
  Moreover, since $k_{(A,\widetilde{x}_2)}=1$ and $k_{(A,e_1)}=1$, it follows that
  \begin{align*}
  \ln\Big(\big|&\big(\alpha_1-\big((z-B')^{-1}f_0\big)(x_1)\big)\big|\|(z-A)^{-1}e_1\|_p
  -\|(z-A)^{-1}\widetilde{x}_2\|_p\Big)\\
  &\approx
  \ln\Big(\big|\big(\alpha_1-\big((z-B')^{-1}f_0\big)(x_1)\big)\big|\|(z-A)^{-1}e_1\|_p\Big)
  \end{align*}
  when $z$ tends to $0$. Then combining (\ref{T:Ae1}) and (\ref{BT:Geq1}), we can deduce that
  \begin{align*}
  k_{(T,x)}
  &=\limsup_{z\rightarrow 0}\frac{\ln\|(z-T)^{-1}x\|_p}{\ln\|(z-T)^{-1}\|}\\
  &\approx
  \limsup_{z\rightarrow 0}\frac{\ln\|(z-T)^{-1}x\|_p}{\ln\|(z-A)^{-1}\|^2}\\
  &\geq\limsup_{z\rightarrow 0}\frac{\ln\big|\big(\alpha_1-\big((z-B')^{-1}f_0\big)(x_1)\big)\big|\|(z-A)^{-1}e_1\|_p}
  {\ln\|(z-A)\|^{2}}\\
  &\geq \limsup_{j\rightarrow \infty}\frac{\ln\big|\big(\alpha_1-\big((\lambda_j-B')^{-1}f_0\big)(x_1)\big)\big|\|(\lambda_j-A)^{-1}e_1\|_p}
  {\ln\|(\lambda_j-A)^{-1}\|^2}\geq\frac{1}{2}.
  \end{align*}
In summary, $k_{(T,x)}\geq\frac{1}{2}$ for all non-zero $x \in l^{p}(\mathbb{Z})$.
Hence, $\Lambda(T)\subset[\frac{1}{2},1]$.
\end{proof}

\begin{lem}\label{B:md1}
Let $T$ be as in $(\ref{bmT})$ above. If $\{w_n\}_{n=1}^{\infty}$ is monotone decreasing and $p^{\prime}$-summable for some positive number $p^{\prime}$, then $1\in\Lambda(T)$.
\end{lem}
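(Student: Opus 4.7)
The plan is to exhibit one vector $x = \begin{bmatrix} x_1 \\ 0 \end{bmatrix}\in l^p(\mathbb{Z})$, supported on the first block, satisfying $k_{(T,x)} = 1$; combined with the trivial bound $k_{(T,x)}\leq 1$ this gives $1\in\Lambda(T)$. The intuition is that $B$ already contains a backward-shift block whose power-set reaches $1$, while the coupling $e_1\otimes f_0$ lets that growth propagate into the second block and combine multiplicatively with $\|(z-A)^{-1}e_1\|_p$, matching the full size $\|(z-T)^{-1}\|\approx\|(z-A)^{-1}\|^2$ established in (\ref{ZA2}).

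Concretely, I would begin by noting that $B$, acting on $\overline{\mathrm{span}}\{e_{-n}:n\geq 0\}$, is isometrically equivalent (via $e_{-n}\leftrightarrow e_n$) to the backward unilateral weighted shift on $l^p(\mathbb{N})$ with weights $\{w_n\}_{n=1}^\infty$. Since this sequence is decreasing and $p'$-summable, Theorem \ref{B:contain1} furnishes, after pulling back through the isomorphism, a witness
\[
x_1=\sum_{n=0}^\infty \alpha_n e_{-n},\qquad \alpha_0=1,\ \alpha_n=\prod_{j=1}^m w_{n+j}\ (n\geq 1),
\]
where $m$ is chosen so that $\{\alpha_n\}$ is summable. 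What I need is the slightly sharper assertion that along real positive $z=r\to 0^+$,
\[
\frac{\ln\bigl|f_0\bigl((r-B)^{-1}x_1\bigr)\bigr|}{\ln\|(r-B)^{-1}\|}\ \longrightarrow\ 1.
\]
The ``$\geq 1$'' side is already extracted in display (\ref{B:kx=1}) of the proof of Theorem \ref{B:contain1}, while the ``$\leq 1$'' side follows from the trivial bound $|f_0((r-B)^{-1}x_1)|\leq\|(r-B)^{-1}\|\|x_1\|_p$; so the ratio really has limit $1$.

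Now I would set $x=\begin{bmatrix}x_1\\0\end{bmatrix}$ and apply the block formula (\ref{zTx}). Because $l^p(\mathbb{Z})$ decomposes as the $\ell^p$-direct sum of the two index blocks, the second coordinate alone gives
\[
\|(z-T)^{-1}x\|_p\ \geq\ \bigl|((z-B')^{-1}f_0)(x_1)\bigr|\cdot\|(z-A)^{-1}e_1\|_p\ =\ \bigl|f_0((z-B)^{-1}x_1)\bigr|\cdot\|(z-A)^{-1}e_1\|_p.
\]
Evaluating along $z=r\to 0^+$ (legitimate by circular symmetry, which gives $\|(r-T)^{-1}\|=\|(z-T)^{-1}\|$ for $|z|=r$) and taking logarithms, I would combine three asymptotic equivalences: the limit displayed above (so $\ln|f_0((r-B)^{-1}x_1)|\sim\ln\|(r-B)^{-1}\|=\ln\|(r-A)^{-1}\|$); equation (\ref{T:Ae1}), giving $\ln\|(r-A)^{-1}e_1\|_p\sim\ln\|(r-A)^{-1}\|$; and equation (\ref{ZA2}), giving $\ln\|(r-T)^{-1}\|\sim 2\ln\|(r-A)^{-1}\|$. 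The resulting ratio of the bound on $\ln\|(r-T)^{-1}x\|_p$ to $\ln\|(r-T)^{-1}\|$ tends to $\tfrac{1+1}{2}=1$, so $k_{(T,x)}\geq 1$ and hence $k_{(T,x)}=1$. The main obstacle is the step of turning Theorem \ref{B:contain1}'s ``$\limsup\geq 1$'' assertion into a genuine limit along $r\to 0^+$—without this upgrade, one cannot safely divide the additive logarithm estimate by $\ln\|(r-T)^{-1}\|$—but as indicated above it comes almost for free from the matching upper bound.
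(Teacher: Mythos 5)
Your proposal is correct and follows essentially the same route as the paper: the same witness vector $\xi_1=\{\alpha_n\}$ supported on the $B$-block, the same lower bound $\|(z-T)^{-1}x\|_p\geq|((z-B')^{-1}f_0)(x_1)|\,\|(z-A)^{-1}e_1\|_p$ from the second block coordinate, combined with (\ref{T:Ae1}), (\ref{ZA2}) and the estimate (\ref{B:kx=1}). The ``$\limsup$ versus $\lim$'' issue you flag is not actually an obstacle even without your upgrade, since the factor $\ln\|(z-A)^{-1}e_1\|_p/\ln\|(z-A)^{-1}\|$ has a genuine limit $1$ and $\limsup(f+g)\geq\limsup f+\liminf g$, which is how the paper proceeds.
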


\begin{proof}
Pick $m\geq 1$ such that the sequence $\{\prod\limits_{j=1}^{m}w_{n+j}\}_{n=0}^{\infty}$ is summable.
Write $\alpha_n=\prod\limits_{j=1}^{m}w_{n+j}$ for $n\geq0$. And
let
$y=\left[\begin{array}{cc}
  \xi_1\\
 \xi_2
\end{array}\right]$,
 where
$$
\xi_1=\left[\begin{array}{cccc}
 \alpha_0\\
  \alpha_1\\
  \alpha_2\\
  \vdots
    \end{array}\right]
    \begin{matrix}
e_0\\
e_{-1}\\
e_{-2}\\
 \vdots
\end{matrix}, \ \
    \xi_2=\left[\begin{array}{cccc}
  0\\
  0\\
  0\\
  \vdots
\end{array}\right]\begin{matrix}
e_1\\
e_{2}\\
e_{3}\\
 \vdots
\end{matrix}
.$$
Then $y\in l^p(\mathbb{Z})$ for any $p\in [1,+\infty)$ and
\[
(z-T)^{-1}y=
\begin{bmatrix}
(z-B)^{-1}\xi_1\\
  -\big((z-B')^{-1}f_0\big)(\xi_1)(z-A)^{-1}e_1+(z-A)^{-1}\xi_2
\end{bmatrix}
.\]
It follows that
$$\|(z-T)^{-1}y\|_p
\geq\big|\big((z-B')^{-1}f_0\big)(\xi_1)\big|\|(z-A)^{-1}e_1\|_p.$$
Notice that $\big|\big((z-B')^{-1}f_0\big)(\xi_1)\big|=\big|f_0\big((z-B)^{-1}\xi_1\big)\big|$.
By (\ref{B:kx=1}), one can observe that
\begin{align*}
\limsup_{z\rightarrow0}\frac{\ln\big|\big((z-B')^{-1}f_0\big)(\xi_1)\big|}{\ln\|(z-B')^{-1}\|}\geq 1.
\end{align*}
Since $\|(z-A)^{-1}\|=\|(z-B)^{-1}\|=\|(z-B')^{-1}\|$, we have
\begin{equation*}
\limsup_{z\rightarrow0}\frac{\ln\big|\big((z-B')^{-1}f_0\big)(\xi_1)\big|}{\ln\|(z-B')^{-1}\|}
=\limsup_{z\rightarrow0}\frac{\ln\big|\big((z-B')^{-1}f_0\big)(\xi_1)\big|}{\ln\|(z-A)^{-1}\|}
\geq 1.
\end{equation*}
Combining (\ref{T:Ae1}), we can obtain that
\begin{equation*}
\begin{aligned}
k_{(T,y)}
&=\limsup_{z\rightarrow 0}\frac{\ln\|(z-T)^{-1}y\|_p}{\ln\|(z-T)^{-1}\|}\\
&\approx\limsup_{z\rightarrow 0}\frac{\ln\|(z-T)^{-1}y\|_p}{\ln\|(z-A)^{-1}\|^2}\\
&\geq\limsup_{z\rightarrow 0}\frac{\ln\big|\big((z-B')^{-1}f_0\big)(\xi_1)\big|+\ln\|(z-A)^{-1}e_1\|_p}{\ln\|(z-A)^{-1}\|^2}
\geq1.
\end{aligned}
\end{equation*}
By the fact $0\leq k_y\leq 1$, it holds that $k_y=1$. Hence $1\in\Lambda(T)$.
\end{proof}

\begin{thm}\label{B:2s1}
Let $T$ be as in $(\ref{bmT})$ above.
If there is a positive number $m_0$ such that $\frac{1}{n+2m_0}\leq w_{n+m_0}\leq\frac{1}{n}$ for $n\geq1$,
then $\Lambda(T)=[\frac{1}{2},1]$.
\end{thm}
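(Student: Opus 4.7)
The plan is to combine Lemma \ref{B:AT}, which already yields $\Lambda(T)\subset[\tfrac12,1]$, with a scaling construction in the spirit of Lemma \ref{B:0to1} in order to produce, for every $r\in[0,1]$, a vector $y_r\in l^p(\mathbb{Z})$ with $k_{(T,y_r)}=\tfrac{r+1}{2}$. Since $\tfrac{r+1}{2}$ sweeps out $[\tfrac12,1]$ as $r$ varies over $[0,1]$, this will establish the reverse inclusion and finish the proof. Note that the operator $B$ appearing in (\ref{bmT}) is, via the identification $e_{-n}\leftrightarrow$ the $n$th canonical basis vector, isometrically isomorphic to an injective quasinilpotent backward unilateral weighted shift with weights $\{w_n\}_{n=1}^\infty$ satisfying the hypotheses of Theorem \ref{B:pw0t1}; in particular Lemma \ref{B:0to1} applies directly to $B$.

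For the construction I would fix an integer $m$ large enough that $\sum_{n\geq 1}\bigl((n+m)!\prod_{j=1}^{n}w_j\bigr)^{-1}<\infty$, which is possible by (\ref{pli}) under the given hypothesis on $\{w_n\}_{n=1}^\infty$. Setting $\alpha_0=1$ and $\alpha_n=\bigl((n+m)!\prod_{j=1}^{n}w_j\bigr)^{-1}$ for $n\geq 1$, I let
\[
\xi_1^{(r)}=e_0+\sum_{n=1}^{\infty}r^{n}\alpha_n\,e_{-n},\qquad y_r=\begin{bmatrix}\xi_1^{(r)}\\ 0\end{bmatrix}\in l^p(\mathbb{Z}).
\]
The explicit computation from the proof of Lemma \ref{B:0to1}, transported to $B$, gives
\[
f_0\bigl((|z|-B)^{-1}\xi_1^{(r)}\bigr)=\frac{1}{|z|}+\frac{|z|^{m-1}}{r^{m}}\Bigl(e^{r/|z|}-\sum_{k=0}^{m}\frac{1}{k!}\frac{r^{k}}{|z|^{k}}\Bigr),
\]
from which the argument used to prove Theorem \ref{B:pw0t1} (applied to $B$) yields the key asymptotic $\ln\bigl|f_0((|z|-B)^{-1}\xi_1^{(r)})\bigr|\approx r\,\ln\|(z-B)^{-1}\|$ as $z\to 0$.

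To compute $k_{(T,y_r)}$ I would invoke the block formula (\ref{norm(z-T)}) together with the identity $\bigl((z-B')^{-1}f_0\bigr)(\xi_1^{(r)})=f_0\bigl((z-B)^{-1}\xi_1^{(r)}\bigr)$ in order to sandwich $\|(z-T)^{-1}y_r\|_p$ between the lower bound $\bigl|f_0((z-B)^{-1}\xi_1^{(r)})\bigr|\,\|(z-A)^{-1}e_1\|_p$ and the upper bound $\bigl|f_0((|z|-B)^{-1}\xi_1^{(r)})\bigr|\,\bigl(\|\xi_1^{(r)}\|_p+\|(z-A)^{-1}e_1\|_p\bigr)$; the latter is obtained exactly as in the derivation of (\ref{xrn}) in Lemma \ref{B:0to1}. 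Taking logarithms, dividing by $\ln\|(z-T)^{-1}\|$, and inserting the three asymptotics $\ln\|(z-T)^{-1}\|\approx 2\ln\|(z-A)^{-1}\|$ (from (\ref{ZA2})), $\ln\|(z-A)^{-1}e_1\|_p\approx\ln\|(z-A)^{-1}\|$ (from (\ref{T:Ae1})), and $\ln\|(z-B)^{-1}\|=\ln\|(z-A)^{-1}\|$, both bounds collapse to $\tfrac{r+1}{2}$, forcing $k_{(T,y_r)}=\tfrac{r+1}{2}$.

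The most delicate point is ensuring that the sandwich really pins $k_{(T,y_r)}$ to exactly $\tfrac{r+1}{2}$: the upper bound is a global limsup estimate, but the lower bound must be realized along an explicit sequence $z\to 0$. Here the circular symmetry $\|(z-T)^{-1}\|=\|(|z|-T)^{-1}\|$ together with the positivity of every coefficient in the power-series expansion of $f_0\bigl((z-B)^{-1}\xi_1^{(r)}\bigr)$ reduces matters to $z=|z|>0$ real, where the key asymptotic is a genuine limit (not merely a limsup) by virtue of the explicit exponential formula above; this is the same mechanism already exploited in Lemmas \ref{B:AT} and \ref{B:md1}.
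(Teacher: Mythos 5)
Your proposal is correct and follows essentially the same route as the paper: the same test vectors $\xi_1^{(r)}=e_0+\sum_{n\ge1}r^n\alpha_n e_{-n}$, the same sandwich via $f_0\bigl((|z|-B)^{-1}\xi_1^{(r)}\bigr)$ and the asymptotics (\ref{T:Ae1}), (\ref{ZA2}), (\ref{fxr}), (\ref{xrn}), yielding $k_{(T,y_r)}=\tfrac{r+1}{2}$. The only (cosmetic) deviation is that you fold the endpoint $\tfrac12$ into the family at $r=0$, where the displayed formula with $r^{m}$ in the denominator degenerates and one must instead use the trivial identity $f_0\bigl((|z|-B)^{-1}e_0\bigr)=\tfrac{1}{|z|}$ together with $\ln\tfrac{1}{|z|}=o\bigl(\ln\|(z-A)^{-1}\|\bigr)$ --- which is exactly the separate computation the paper performs for its vector $\widetilde{x}$.
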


\begin{proof}
By Lemmas $\ref{B:AT}$ and $\ref{B:md1}$, we have $\Lambda(T)\subset[\frac{1}{2},1]$ and $1\in \Lambda(T)$.
Next, we need only prove $[\frac{1}{2},1)\subset\Lambda(T)$.

Pick a natural number $m>2m_0+2$, for $0<r\leq1$, set
$x_r=\left[\begin{array}{cc}
  \xi_r\\
 \xi_0
\end{array}\right]$,
 where
$$
\xi_r=\left[\begin{array}{cccc}
 1\\
  r\big((1+m)!w_1\big)^{-1}\\
  r^2\big((2+m)!\prod\limits_{j=1}^2w_j\big)^{-1}\\
  \vdots
    \end{array}\right]
    \begin{matrix}
e_0\\
e_{-1}\\
e_{-2}\\
 \vdots
\end{matrix}, \ \
    \xi_0=\left[\begin{array}{cccc}
  0\\
  0\\
  0\\
  \vdots
\end{array}\right]\begin{matrix}
e_1\\
e_{2}\\
e_{3}\\
 \vdots
\end{matrix}
.$$
By (\ref{pli}), we have $x_r\in l^p(\mathbb{Z})$ for any $p\in[1,+\infty)$. Then
\[
(z-T)^{-1}x_r=
\begin{bmatrix}
(z-B)^{-1}\xi_r\\
-\big((z-B')^{-1}f_0\big)(\xi_r)(z-A)^{-1}e_1+(z-A)^{-1}\xi_0
\end{bmatrix}
.\]
It follows that
$$\|(z-T)^{-1}x_r\|_p=\|(z-B)^{-1}\xi_r\|_p+\big|\big((z-B')^{-1}f_0\big)(\xi_r)\big|\|(z-A)^{-1}e_1\|_p.$$
For $\|(z-B)^{-1}\xi_r\|_p$, by (\ref{xrn}), one can observe that
\begin{align*}
\big|f_0\big((z-B)^{-1}\xi_r\big)\big|
\leq
\|(z-B)^{-1}\xi_r\|_p
\leq\big|f_0\big((z-B)^{-1}\xi_r\big)\big|\|\xi_r\|_p.
\end{align*}
So, when $z$ tends to $0$, we have
$$\ln\|(z-B)^{-1}\xi_r\|_p\approx \ln\big|f_0\big((z-B)^{-1}\xi_r\big)\big|.$$
For $\|(z-A)^{-1}e_1\|_p$, by (\ref{T:Ae1}), we have that
$$\ln\|(z-A)^{-1}e_1\|_p\approx\ln\|(z-A)^{-1}\|.$$
For $\big|\big((z-B')^{-1}f_0\big)(\xi_r)\big|$, by $(\ref{fxr})$, it follows that
\begin{align*}
\big|\big((z-B')^{-1}f_0\big)(\xi_r)\big|
&=\big|f_0\big((z-B)^{-1}\xi_r\big)\big|=\big|\frac{1}{z}+\frac{z^{m-1}}{r^{m}}\big(e^{\frac{r}{z}}-\sum\limits_{n=0}^{m}\frac{1}{n!}\frac{r^n}{z^n}\big)\big|.
\end{align*}
Hence, as $z$ tends to $0$, we have
\begin{align*}
\ln\|(z-T)^{-1}x_r\|_p
&\approx\ln\|(z-A)^{-1}e_1\|_p\big|\big((z-B')^{-1}f_0\big)(\xi_r)\big|.
\end{align*}
Notice that
\begin{align*}
\|(z-A)^{-1}\|&=\|(z-B)^{-1}\|
\geq \frac{1}{\|\xi_1\|_p}\|(z-B)^{-1}\xi_1\|_p\\
&\geq \frac{1}{\|\xi_1\|_p}|f_0((z-B)^{-1}\xi_1)|\\
&=\frac{1}{\|\xi_1\|_p}\big|\frac{1}{z}+z^{m-1}\big(e^{\frac{1}{z}}-\sum\limits_{n=0}^{m}\frac{1}{n!}\frac{1}{z^n}\big)\big|
\end{align*}
and
\begin{align*}
\|(z-A)^{-1}\|
\leq
\frac{1}{|z|}+\sum\limits_{k=1}^{m_0}\frac{\prod\limits_{j=1}^k w_j}{|z|^{k+1}}+\frac{1}{|z|}\big(e^{\frac{1}{|z|}}-\sum\limits_{k=0}^{m_0}\frac{1}{k!|z|^{k}}\big).
\end{align*}
So, as $z$ tends to $0$, we have
$\ln\|(z-A)^{-1}\|\approx \ln e^{\frac{1}{|z|}}$ and
\begin{align*}
\ln\|(z-T)^{-1}x_r\|_p
&\approx\ln e^{\frac{1}{|z|}}+\ln \big|\frac{1}{z}+\frac{z^{m-1}}{r^{m}}\big(e^{\frac{r}{z}}-\sum\limits_{n=0}^{m}\frac{1}{n!}\frac{r^n}{z^n}\big)\big|.
\end{align*}
%Notice that,
%\begin{align*}
%\limsup_{|z|\rightarrow 0}\ln \frac{1}{|z|}+\frac{|z|^{m-1}}{r^{m}}\big(e^{\frac{r}{|z|}}-\sum\limits_{n=0}^{m}\frac{1}{n!}\frac{r^n}{|z|^n}\big)
%&\leq
%\limsup_{z\rightarrow 0}\ln \big|\frac{1}{z}+\frac{z^{m-1}}{r^{m}}\big(e^{\frac{r}{z}}-\sum\limits_{n=0}^{m}\frac{1}{n!}\frac{r^n}{z^n}\big)\big|\\
%&\leq
%\limsup_{|z|\rightarrow 0}\ln \frac{1}{|z|}+\frac{|z|^{m-1}}{r^{m}}\big(e^{\frac{r}{|z|}}-\sum\limits_{n=0}^{m}\frac{1}{n!}\frac{r^n}{|z|^n}\big).
%\end{align*}
It follows that
\begin{align*}
k_{(T,x_r)}
&=\limsup_{z\rightarrow 0}\frac{\ln\|(z-T)^{-1}x_r\|_p}{\ln\|(z-T)^{-1}\|}\\
&\approx
\limsup_{z\rightarrow 0}\frac{\ln\|(z-A)^{-1}e_1\|_p\big|\big((z-B')^{-1}f_0\big)(\xi_r)\big|}{\ln\|(z-A)^{-1}\|^2}\\
&\approx \limsup_{z\rightarrow 0}
\frac{\ln e^{\frac{1}{|z|}}+\ln\big|\frac{1}{z}+\frac{z^{m-1}}{r^{m}}\big(e^{\frac{r}{z}}-\sum\limits_{n=0}^{m}\frac{1}{n!}\frac{r^n}{z^n}\big)\big|}
{\ln (e^{\frac{1}{|z|}})^2}\\
&=\limsup_{|z|\rightarrow 0}\frac{\ln e^{\frac{1}{|z|}}+\ln\Big(\frac{1}{|z|}+\frac{|z|^{m-1}}{r^{m}}\big(e^{\frac{r}{|z|}}-\sum\limits_{n=0}^{m}\frac{1}{n!}\frac{r^n}{|z|^n}\big)\Big)}
{\ln (e^{\frac{1}{|z|}})^2}\\
&=\limsup_{|z|\rightarrow 0}
\frac{\ln e^{\frac{1}{|z|}}+\ln e^{\frac{r}{|z|}}\Big(\frac{(e^{\frac{r}{|z|}})^{-1}}{|z|}+\frac{|z|^{m-1}}{r^{m}}\big(1-(e^{\frac{r}{|z|}})^{-1}\sum\limits_{n=0}^{m}\frac{1}{n!}\frac{r^n}{|z|^n}\big)\Big)}
{\ln (e^{\frac{1}{|z|}})^2}\\
&=\frac{1+r}{2}.
\end{align*}
Since $0<r\leq1$, it yields that $(\frac{1}{2},1]\subset\Lambda(T)$.

Finally, we prove that $\frac{1}{2}\in\Lambda(T)$.
Suppose that
$\widetilde{x}=\left[\begin{array}{cc}
  \beta_1\\
 \beta_2
\end{array}\right]$,
 where
$$
\beta_1=\left[\begin{array}{cccc}
 1\\
  0\\
  0\\
  \vdots
    \end{array}\right]
    \begin{matrix}
e_0\\
e_{-1}\\
e_{-2}\\
 \vdots
\end{matrix}, \ \
    \beta_2=\left[\begin{array}{cccc}
  0\\
  0\\
  0\\
  \vdots
\end{array}\right]\begin{matrix}
e_1\\
e_{2}\\
e_{3}\\
 \vdots
\end{matrix}
.$$
Then $\widetilde{x}\in l^p(\mathbb{Z})$ and
\[
(z-T)^{-1}\widetilde{x}=
\begin{bmatrix}
 (z-B)^{-1}\beta_1\\
  -\big((z-B')^{-1}f_0\big)(\beta_1)(z-A)^{-1}e_1+(z-A)^{-1}\beta_2\\
\end{bmatrix}
.\]
It follows that
$$\|(z-T)^{-1}\widetilde{x}\|_p=\|(z-B)^{-1}\beta_1\|_p+\big|\big((z-B')^{-1}f_0\big)(\beta_1)\big|\|(z-A)^{-1}e_1\|_p.$$
Notice that
$$\big|\big((z-B')^{-1}f_0\big)(\beta_1)\big|=\big|f_0((z-B)^{-1}\beta_1)\big|=\frac{1}{|z|}$$
and $\|(z-B)^{-1}\beta_1\|_p=\frac{1}{|z|^p}$.
So, when $z$ tends to $0$, we have
\begin{align*}
\ln\|(z-T)^{-1}\widetilde{x}\|_p
&\approx\ln\big|\big((z-B')^{-1}f_0\big)(\beta_1)\big|\|(z-A)^{-1}e_1\|_p\\
&\approx\ln\frac{1}{|z|}+\ln\|(z-A)^{-1}\|.
\end{align*}
Then
\begin{align*}
k_{(T,\widetilde{x})}
&=\limsup_{z\rightarrow 0}\frac{\ln\|(z-T)^{-1}\widetilde{x}\|_p}{\ln\|(z-T)^{-1}\|}\\
&\approx
\limsup_{z\rightarrow 0}\frac{\ln\frac{1}{|z|}+\ln\|(z-A)^{-1}\|}{\ln\|(z-A)^{-1}\|^2}
=\frac{1}{2}.
\end{align*}
Therefore, $\frac{1}{2}\in\Lambda(T)$. In summary $\Lambda(T)=[\frac{1}{2},1]$, completing the proof.
\end{proof}

We close this paper with an interesting question.
\begin{ques}
Given a weighted shift $T$ on $l^\infty$, is there the same power set as $T$ on $\ell^p$ for $p<\infty$?
\end{ques}

%%%%%%%%%%%%%%%%%%%%%%%%%%%%%%%%%%%%%%%%%%%

\end{document}